\newtheorem{thm}{Theorem}
\newtheorem{cor}[thm]{Corollary}
\newtheorem{lemma}[thm]{Lemma}
\newtheorem{prop}[thm]{Proposition}
\newcommand{\R}{\mathbb{R}}
\newcommand{\E}{\mathbb{E}}
\newcommand{\Prob}{\mathbb{P}}
\newcommand{\N}{\mathbb{N}}
\newcommand{\Z}{\mathbb{Z}}
\newcommand{\C}{\mathbb{C}}
\DeclareMathOperator{\Var}{Var}
\DeclareMathOperator{\Cov}{Cov}
\newcommand{\inprod}[2]{\left\langle #1, #2 \right\rangle}
\renewcommand{\Re}{\operatorname{Re}}
\renewcommand{\Im}{\operatorname{Im}}
\newcommand{\abs}[1]{\left\vert #1 \right\vert}
\newcommand{\norm}[1]{\left\Vert #1 \right\Vert}
\newcommand{\eps}{\varepsilon}
\newcommand{\Circle}{\mathbb{T}}
\newcommand{\Normal}{\mathcal{N}}
\newcommand{\ind}[1]{\mathbbm{1}_{#1}}
\numberwithin{thm}{section}
\numberwithin{equation}{section}
\author{Mark W.\ Meckes}
\address{Department of Mathematics, Case Western Reserve University,
10900 Euclid Ave., Cleveland, Ohio 44106, U.S.A.}
\email{mark.meckes@case.edu}
\urladdr{www.case.edu/artsci/math/mwmeckes/}
\title{The spectra of random abelian $G$-circulant matrices}
\subjclass[2010]{Primary 60B20; secondary 15B99, 43A25, 60F05}
\keywords{Random matrix, $G$-circulant matrix, spectral distribution}
\begin{document}

\maketitle

\begin{abstract}
  This paper studies the asymptotic behavior of eigenvalues of random
  abelian $G$-circulant matrices, that is, matrices whose structure is
  related to a finite abelian group $G$ in a way that naturally
  generalizes the relationship between circulant matrices and cyclic
  groups.  It is shown that, under mild conditions, when the size of
  the group $G$ goes to infinity, the spectral measures of such random
  matrices approach a deterministic limit.  Depending on some aspects
  of the structure of the groups, whether the matrices are constrained
  to be Hermitian, and a few details of the distributions of the
  matrix entries, the limit measure is either a (complex or real)
  Gaussian distribution or a mixture of two Gaussian distributions.
\end{abstract}



\section{Introduction}

Given a finite group $G$ and a function $f : G \to \C$, the matrix $M
= \bigl[f(ab^{-1})\bigr]_{a,b \in G}$ is called a $G$-circulant matrix
by Diaconis \cite{Diaconis-book,Diaconis-matrices}.  This generalizes
the classical notion of circulant matrices, which arise as the special
case in which $G$ is a finite cyclic group.  The action of such a
matrix $M$ on the vector space $\{ g : G \to \C\}$ is as a convolution
operator: for $g : G \to \C$ and $a \in G$,
\begin{equation}\label{E:convolution}
(Mg)(a) = \sum_{b \in G} f(ab^{-1}) g(b) =: (f*g)(a).
\end{equation}

This paper considers the asymptotic behavior of the spectra of random
$G$-circulant matrices, or equivalently random convolution operators
on $G$, when $G$ is a large abelian group.  (For the rest of this
paper, $G$ will always stand for a \emph{finite abelian} group.)  Such
random matrices will be generated by picking the values $f(a)$
independently, with or without imposing a constraint $f(a^{-1}) =
\overline{f(a)}$ which is equivalent to insisting that the matrix $M$
is Hermitian.  This generalizes the study of random circulant
matrices, whose theory has already been developed in
\cite{BoMi,BoSe,BrSe,Meckes,BoHaSa} among many other papers, with
applications discussed in \cite{JaSr,YiMoYaZh}.  The richer structure
of arbitrary abelian groups relative to cyclic groups leads to the
appearance of some interesting phenomena which do not occur for
circulant matrices, or the more familiar setting of random matrices
with independent entries.

The prototypical situation (exemplified in Corollaries
\ref{T:C-Ginibre-limit}, \ref{T:GUE-limit}, and \ref{T:Z2-GOE-limit},
and Theorems \ref{T:circular-law-uncorrelated} and
\ref{T:semicircle-law-special} below) is that when the size of $G$
grows the empirical spectral distribution of a (properly normalized)
random $G$-circulant matrix $M$ approaches a Gaussian distribution.
When $M$ is constrained to be Hermitian the limit will be a real
Gaussian distribution; without such a constraint it will be a complex
Gaussian distribution.  These situations may be thought of as
analogous to the semicircle law for Hermitian random matrices and
circular law for non-Hermitian random matrices with independent
entries, respectively.  This behavior, which has already been observed
for random circulant matrices in \cite{BoMi,Meckes}, occurs in
particular if only a negligible fraction of the elements of $G$ are of
order $2$, and also if every nonidentity element of $G$ is of order
$2$.  On the other hand, if neither of these is the case then more
complicated limiting distributions occur which are mixtures of two
Gaussian distributions (as in Theorems \ref{T:circular-law-correlated}
and \ref{T:semicircle-law-general} below).

Another perspective on these results, which is crucial in the proofs,
is that they describe the distribution of values of random Fourier
series on $G$.  The supremum of such a random Fourier series is
already a thoroughly studied quantity \cite{Kahane,MaPi}.  In
particular, results of Marcus and Pisier \cite{MaPi} include as
special cases estimates of the spectral norms of random $G$-circulant
matrices, as pointed out in Proposition \ref{T:norm} below.

Section \ref{S:Fourier} below briefly reviews the facts about Fourier
analysis on finite abelian groups which are used here and points out
their immediate consequences for $G$-circulant matrices; some notation
and conventions used in the remainder of the paper are established
there.  Section \ref{S:Gaussian} investigates the spectra of some
random $G$-circulant matrices whose entries are Gaussian random
variables. The invariance properties of Gaussian random variables
allow an easy detailed study to be undertaken which illuminates the
general situation, in particular the role of the number of elements of
order $2$.  Finally, Section \ref{S:general} determines the asymptotic
behavior of the spectrum for general entries with finite variances.

The cases of $G$-circulant matrices with heavy-tailed entries, and of
random $G$-circulant matrices when $G$ is a nonabelian finite group,
will be investigated in future work.

\section*{Acknowledgements}

The author thanks Persi Diaconis for encouragement and pointers to the
literature, John Duncan for helpful discussions about character
theory, and the referee for careful reading and useful comments.  This
research was partly supported by National Science Foundation grant
DMS-0902203.


\section{Some Fourier analysis and notation}
\label{S:Fourier}

For a finite abelian group $G$, we denote by $\widehat{G}$ the family
of group homomorphisms $\chi : G \to \Circle$, where $\Circle$ is the
multiplicative group $\{z \in \C \mid \abs{z} = 1\}$. The elements of
$\widehat{G}$ are called characters of $G$; $\widehat{G}$ is a group
under the operation of pointwise multiplication. The multiplicative
inverse of a character $\chi$ is its pointwise complex conjugate
$\overline{\chi}$.  From the homomorphism property it follows that for
$a \in G$ and $\chi \in \widehat{G}$, $\chi(a^{-1}) =
\overline{\chi}(a)$.

We denote by $\ell^2(G)$ the space of functions $f: G \to \C$ equipped
with the inner product
\[
\inprod{f}{g} = \sum_{a \in G} f(a) \overline{g(a)},
\]
and $\ell^2(\widehat{G})$ is defined analogously.  The Fourier
transform of $f \in \ell^2(G)$ is the function $\widehat{f} \in
\ell^2(\widehat{G})$ given by
\[
\widehat{f}(\chi) = \inprod{f}{\overline{\chi}}
= \sum_{a \in G} f(a) \chi(a).
\]
This includes as special cases both the classical discrete Fourier
transform (when $G$ is cyclic) and the Walsh--Hadamard transform (when
$G$ is a product of cyclic groups of order $2$). The following lemma
summarizes the most important fundamental facts about the Fourier
transform for our purposes.

\begin{lemma} \label{T:FT-isometry}
  Let $G$ be a finite abelian group with $\abs{G}$ elements.
  \begin{enumerate}
  \item \label{I:onb} The functions $\bigl\{
    \frac{1}{\sqrt{\abs{G}}}\chi \mid \chi \in \widehat{G}\bigr\}$
    form an orthonormal basis of $\ell^2(G)$.
  \item \label{I:isometry} The map $f \mapsto
    \frac{1}{\sqrt{\abs{G}}}\widehat{f}$ is a linear isometry of
    $\ell^2(G)$ onto $\ell^2(\widehat{G})$.
  \item \label{I:convolution} If $f, g \in \ell^2(G)$, then for each
    $\chi \in \widehat{G}$, $\widehat{f*g}(\chi) = \widehat{f}(\chi)
    \widehat{g}(\chi)$ (where the convolution $f*g$ is defined in
    \eqref{E:convolution}.
  \end{enumerate}
\end{lemma}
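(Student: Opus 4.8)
The plan is to handle the three parts in the order stated, since (2) is a formal consequence of (1) while (3) is an independent direct computation.

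\textbf{Part (1).} The central input is the orthogonality relations for characters, which I would derive from the elementary character-sum identity: for $\chi \in \widehat{G}$, the sum $\sum_{a \in G}\chi(a)$ equals $\abs{G}$ when $\chi$ is trivial and $0$ otherwise. The vanishing case follows by choosing $b$ with $\chi(b)\neq 1$ (possible whenever $\chi$ is nontrivial) and using the substitution $a \mapsto ba$ together with the homomorphism property to get $\chi(b)\sum_a \chi(a) = \sum_a \chi(a)$, forcing the sum to be $0$. Since $\widehat{G}$ is a group and $\chi\overline{\psi}$ is trivial exactly when $\chi = \psi$, applying this identity to $\chi\overline{\psi}$ gives $\inprod{\chi}{\psi} = \sum_a \chi(a)\overline{\psi(a)} = \abs{G}\,\delta_{\chi,\psi}$. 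Hence the rescaled characters $\frac{1}{\sqrt{\abs{G}}}\chi$ form an orthonormal system in $\ell^2(G)$.

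To promote orthonormality to a basis I need $\abs{\widehat{G}} = \dim \ell^2(G) = \abs{G}$, because an orthonormal set of this cardinality in a finite-dimensional inner product space is automatically a basis. The equality $\abs{\widehat{G}} = \abs{G}$ is where the genuine content lies: it rests on the structure theorem for finite abelian groups, writing $G$ as a product of cyclic groups, observing that a cyclic group $\Z_n$ has exactly $n$ characters (each determined by sending a generator to an $n$th root of unity), and that characters of a direct product factor as products of characters of the factors. This is the one step I expect to cite rather than reprove, and it is the main (indeed the only) obstacle in the lemma.

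\textbf{Part (2).} Given (1) this is purely formal. By Parseval, the coordinate map sending $f$ to the sequence of its coefficients $\bigl(\inprod{f}{\frac{1}{\sqrt{\abs{G}}}\chi}\bigr)_{\chi \in \widehat{G}}$ against the orthonormal basis of (1) is an isometric isomorphism of $\ell^2(G)$ onto $\ell^2(\widehat{G})$. I would then compute $\inprod{f}{\frac{1}{\sqrt{\abs{G}}}\chi} = \frac{1}{\sqrt{\abs{G}}}\widehat{f}(\overline{\chi})$, using $\overline{\chi(a)} = \overline{\chi}(a)$, so that $f \mapsto \frac{1}{\sqrt{\abs{G}}}\widehat{f}$ differs from the coordinate map only by the reindexing $\chi \mapsto \overline{\chi}$. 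That reindexing is a bijection of $\widehat{G}$ and hence a unitary of $\ell^2(\widehat{G})$, so the composition is again an isometry; bijectivity is then immediate since both spaces have dimension $\abs{G}$.

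\textbf{Part (3).} This is the easiest part, a direct calculation. Expanding the definitions,
\[
\widehat{f*g}(\chi) = \sum_{a \in G}\Bigl(\sum_{b \in G} f(ab^{-1})g(b)\Bigr)\chi(a),
\]
I would substitute $c = ab^{-1}$ and use $\chi(a) = \chi(c)\chi(b)$ to factor the double sum as $\bigl(\sum_c f(c)\chi(c)\bigr)\bigl(\sum_b g(b)\chi(b)\bigr) = \widehat{f}(\chi)\widehat{g}(\chi)$. Thus, apart from invoking the standard duality fact $\abs{\widehat{G}} = \abs{G}$, every step reduces to unwinding the definitions.
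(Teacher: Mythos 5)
Your proof is correct, and all three computations check out: the translation trick does give the orthogonality relations, the reindexing $\chi \mapsto \overline{\chi}$ in part (2) is handled properly, and the substitution $c = ab^{-1}$ in part (3) is exactly the computation the paper has in mind. The route is genuinely different from the paper's, however, because the paper does not prove parts (1) and (2) at all: it cites Serre, where the orthonormal-basis statement is a specialization of the representation-theoretic fact that the irreducible characters of a finite group form an orthonormal basis of the space of class functions (for abelian $G$ every function is a class function and every irreducible character is a homomorphism $G \to \Circle$). The practical difference is where the count $\bigl\vert\widehat{G}\bigr\vert = \abs{G}$ comes from. Your self-contained argument buys it from the classification of finite abelian groups; the route through Serre gets completeness of characters, hence the count, from general character theory (number of irreducibles equals number of conjugacy classes), with no classification needed. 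This is worth noticing because the author elsewhere (the remark following Lemma \ref{T:p2}) deliberately avoids leaning on the classification, calling it noncanonical and ``in any case not useful here''; your proof imports it at the one step where the paper's sources do not need it. On the other hand, your treatment of part (2) makes explicit a detail the paper leaves entirely to the reader: with the paper's convention $\widehat{f}(\chi) = \inprod{f}{\overline{\chi}}$, the normalized Fourier transform is the coefficient map for the basis of (1) composed with the coordinate permutation $\chi \mapsto \overline{\chi}$ of $\ell^2(\widehat{G})$, and it is this composition, not the raw coefficient map, that is the isometry in question. That subtlety is real and your handling of it is the cleanest part of the write-up.
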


\begin{proof}
  \begin{enumerate}
  \item See Theorem 6 on \cite[p.\ 19]{Serre}.
  \item This follows easily from Proposition 7 on \cite[p.\ 20]{Serre}
    (which is a consequence of part (\ref{I:onb})).
  \item This follows directly from the definitions by a
    straightforward computation. \qedhere
  \end{enumerate}
\end{proof}

\medskip

Observe that contained in Lemma \ref{T:FT-isometry}(\ref{I:onb}) is
the fact that $\abs{G} = \bigl\vert\widehat{G}\bigr\vert$.

\medskip

We will need two additional facts about characters of finite abelian
groups which are not as easily located in standard references.

\begin{lemma}\label{T:p2}
  The number of elements $a \in G$ such that $a^2 = 1$ is equal to the
  number of characters $\chi \in \widehat{G}$ such that $\chi =
  \overline{\chi}$.
\end{lemma}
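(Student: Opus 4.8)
The plan is to show that both quantities in the statement equal $\abs{G}/\abs{G^2}$, where $G^2 = \{a^2 : a \in G\}$ denotes the set of squares in $G$; since $G$ is abelian this set is already a subgroup. Computing each of the two counts independently and arriving at the same value sidesteps the need to exhibit an explicit bijection between the two sets being counted.

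For the group side, I would introduce the squaring map $\phi \colon G \to G$ defined by $\phi(a) = a^2$, which is a homomorphism precisely because $G$ is abelian. Its kernel is exactly $\{a \in G : a^2 = 1\}$ and its image is $G^2$, so the first isomorphism theorem yields $\abs{\{a \in G : a^2 = 1\}} = \abs{G}/\abs{G^2}$.

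For the character side, the key observation is that, since each $\chi$ takes values in $\Circle$, we have $\overline{\chi} = \chi^{-1}$, so the condition $\chi = \overline{\chi}$ is equivalent to $\chi^2 = 1$, that is, $\chi(a)^2 = \chi(a^2) = 1$ for every $a \in G$. Hence $\chi = \overline{\chi}$ holds exactly when $\chi$ is trivial on the subgroup $G^2$, i.e.\ when $\chi$ factors through the quotient $G/G^2$. This sets up a correspondence between $\{\chi \in \widehat{G} : \chi = \overline{\chi}\}$ and $\widehat{G/G^2}$. Applying the identity $\abs{H} = \bigl\vert\widehat{H}\bigr\vert$ (the consequence of Lemma \ref{T:FT-isometry}(\ref{I:onb}) observed above) to the group $H = G/G^2$, I obtain $\abs{\{\chi : \chi = \overline{\chi}\}} = \abs{G/G^2} = \abs{G}/\abs{G^2}$, which matches the group side and finishes the proof.

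I expect the one step needing genuine care to be the identification of the real characters with $\widehat{G/G^2}$: one must verify not only that $\chi^2 = 1$ forces $\chi$ to be trivial on all of $G^2$ (immediate from $\chi(a^2) = \chi(a)^2$), but also that every character of $G/G^2$ lifts back to such a $\chi$, so that the correspondence is a genuine bijection and not merely an injection. Everything else reduces to the first isomorphism theorem and the already-recorded equality $\abs{G} = \bigl\vert\widehat{G}\bigr\vert$. One could alternatively deduce the result from the abstract isomorphism $G \cong \widehat{G}$, but the counting argument above is self-contained and avoids constructing that isomorphism explicitly.
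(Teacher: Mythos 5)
Your proof is correct, but it takes a genuinely different route from the paper's. The paper argues via a Parseval-type computation: it applies the Fourier isometry (Lemma \ref{T:FT-isometry}(\ref{I:isometry})) to the delta functions $\delta_a$, for which $\widehat{\delta_a}(\chi) = \chi(a)$, and evaluates $\sum_{a \in G} \inprod{\delta_a}{\delta_{a^{-1}}}$ in two ways: directly it counts the elements with $a^2 = 1$, while after transforming and exchanging the order of summation it becomes $\frac{1}{\abs{G}} \sum_{\chi \in \widehat{G}} \inprod{\chi}{\overline{\chi}}$, which by orthonormality of characters counts the real characters. Your argument is instead structural: you identify both counts with the index $\abs{G}/\abs{G^2}$, using the first isomorphism theorem for the squaring homomorphism on the group side, and on the dual side the correspondence between characters trivial on $G^2$ and characters of $G/G^2$, together with the fact $\abs{H} = \bigl\vert \widehat{H} \bigr\vert$ already recorded in the paper. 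The step you flag as needing care is indeed the only delicate one, and it goes through: a character of $G$ killing $G^2$ factors uniquely through $G/G^2$ by the universal property of the quotient, and conversely every character of $G/G^2$ pulls back to a character of $G$ trivial on $G^2$ (which is then automatically $\{\pm 1\}$-valued, hence real), so the correspondence is a genuine bijection; note this does not require the extension result of Lemma \ref{T:extensions}. What the paper's approach buys is economy within its own framework --- it uses nothing beyond the orthonormality and isometry of Lemma \ref{T:FT-isometry}, and the same character-sum computation pattern recurs in the covariance calculations of Section \ref{S:general}. What your approach buys is extra structural information: it shows that the real characters form a subgroup canonically isomorphic to $\widehat{G/G^2}$ and that both counts equal the index of the subgroup of squares, rather than merely showing the two cardinalities agree.
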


\begin{proof}
  For $a \in G$, define $\delta_a : G \to \C$ by $\delta_a(b) =
  \delta_{a,b}$, where the latter is the Kronecker delta function, and
  observe that $\{ \delta_a \mid a \in G\}$ is an orthonormal basis of
  $\ell^2(G)$. Then $\widehat{\delta_a}(\chi) = \chi(a)$ for each
  $\chi \in \widehat{G}$. By Lemma
  \ref{T:FT-isometry}(\ref{I:isometry}), the number of $a \in G$ such
  that $a^2 = 1$ is equal to
  \begin{align*}
    \sum_{a \in G} \inprod{\delta_a}{\delta_{a^{-1}}}
    &= \frac{1}{\abs{G}} \sum_{a \in G} 
    \inprod{\widehat{\delta_a}}{\widehat{\delta_{a^{-1}}}} 
    = \frac{1}{\abs{G}} \sum_{a \in G} \sum_{\chi \in \widehat{G}}
    \chi(a) \overline{\chi}(a^{-1}) \\
    &= \frac{1}{\abs{G}} \sum_{\chi \in \widehat{G}} \sum_{a \in G} 
    \chi(a)^2 
    = \frac{1}{\abs{G}} \sum_{\chi \in \widehat{G}}
    \inprod{\chi}{\overline{\chi}},
  \end{align*}
  which by Lemma \ref{T:FT-isometry}(\ref{I:onb}) is equal to the
  number of $\chi \in \widehat{G}$ such that $\chi = \overline{\chi}$.
\end{proof}

\medskip

Lemma \ref{T:p2} says that $G$ and $\widehat{G}$ have equal numbers of
elements of order $2$.  A much stronger fact is also true: $G$ and
$\widehat{G}$ are isomorphic groups.  However, this isomorphism is
noncanonical, depends on the classification of finite abelian groups,
and in any case is not useful here.

\begin{lemma}\label{T:extensions}
  Let $H$ be a subgroup of a finite abelian group $H$. Then each
  character on $H$ extends to a character on $G$ in precisely
  $\abs{G}/\abs{H}$ distinct ways.
\end{lemma}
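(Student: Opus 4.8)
The plan is to study the restriction homomorphism $\rho \colon \widehat{G} \to \widehat{H}$ given by $\rho(\chi) = \chi|_H$, and to deduce the statement from the fact that the fibers of a group homomorphism all have the same cardinality. Once I know that $\rho$ is surjective, every character $\psi \in \widehat{H}$ will have a nonempty fiber $\rho^{-1}(\psi)$; each such fiber is a coset of $\ker\rho$ and hence has exactly $\abs{\ker\rho}$ elements. So the whole claim reduces to computing $\abs{\ker\rho}$ and verifying surjectivity. (Here I read the ambient group in the statement as $G$.)

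First I would identify the kernel. A character $\chi \in \widehat{G}$ lies in $\ker\rho$ precisely when it is trivial on $H$, which happens exactly when $\chi$ factors through the quotient $G/H$, i.e.\ $\chi = \widetilde\chi \circ \pi$ for a unique character $\widetilde\chi$ of $G/H$, where $\pi \colon G \to G/H$ is the canonical projection. The correspondence $\chi \mapsto \widetilde\chi$ is a group isomorphism from $\ker\rho$ onto $\widehat{G/H}$, so by Lemma \ref{T:FT-isometry}(\ref{I:onb}) applied to the group $G/H$ I get $\abs{\ker\rho} = \bigl\vert\widehat{G/H}\bigr\vert = \abs{G/H} = \abs{G}/\abs{H}$.

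Next I would obtain surjectivity by a counting argument, which is where the one genuinely substantive point lies. By the first isomorphism theorem the image $\rho(\widehat{G})$ has cardinality $\abs{\widehat{G}}\big/\abs{\ker\rho} = \abs{G}\big/\bigl(\abs{G}/\abs{H}\bigr) = \abs{H}$. On the other hand $\rho(\widehat{G})$ is a subset of $\widehat{H}$, and $\abs{\widehat{H}} = \abs{H}$ by Lemma \ref{T:FT-isometry}(\ref{I:onb}) applied to $H$. Since a subset of a finite set with the same cardinality must be the whole set, $\rho$ is surjective. Combining this with the previous paragraph, each $\psi \in \widehat{H}$ has exactly $\abs{\ker\rho} = \abs{G}/\abs{H}$ extensions to $G$, which is the claim.

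The only real obstacle is surjectivity of $\rho$ — that characters on a subgroup really do extend — and the pleasant feature of this approach is that the kernel computation delivers surjectivity for free via a cardinality comparison, sidestepping any explicit construction of extensions. A constructive alternative would adjoin the generators of $G/H$ to $H$ one at a time, extending a character across each new generator $g$ by choosing its value to be any $m$-th root of $\psi(g^m)$, where $m$ is the order of $g$ in the relevant quotient; but the counting argument above is shorter and uses only the duality facts already recorded in Lemma \ref{T:FT-isometry}.
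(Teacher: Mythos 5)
Your proof is correct, and it handles the one substantive step differently from the paper. Both arguments are built on the same skeleton: the restriction homomorphism $\rho \colon \widehat{G} \to \widehat{H}$, whose nonempty fibers are cosets of $\ker\rho$ and hence all of the same size, so that everything reduces to surjectivity of $\rho$ together with a count. The paper stops exactly there: it reduces the lemma to the statement that every character of $H$ extends to $G$ at all, and cites Apostol for that fact (where it is proved constructively, essentially by the generator-by-generator extension you sketch as an alternative at the end). You instead close the argument internally: identifying $\ker\rho$ with $\widehat{G/H}$ (characters trivial on $H$ factor through the quotient) gives $\abs{\ker\rho} = \abs{G/H} = \abs{G}/\abs{H}$ by Lemma \ref{T:FT-isometry}(\ref{I:onb}) applied to the finite abelian group $G/H$, and then the first isomorphism theorem forces the image of $\rho$ to have cardinality $\abs{H} = \bigl\vert\widehat{H}\bigr\vert$, so surjectivity falls out of the counting instead of being an input. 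There is no circularity in this, since the fact $\bigl\vert\widehat{K}\bigr\vert = \abs{K}$, used for $K = G$, $H$, and $G/H$, rests only on the orthonormal-basis statement of Lemma \ref{T:FT-isometry}(\ref{I:onb}) and not on any extension theorem. What your route buys is a self-contained proof that needs no external reference and yields the extension theorem as a corollary; what the paper's route buys is brevity, at the price of outsourcing the key step. (You also correctly read the statement's ``subgroup of a finite abelian group $H$'' as a typo for $G$.)
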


\begin{proof}
  It is easy to check that restriction to $H$ defines a homomorphism
  $\widehat{G} \to \widehat{H}$. Since each coset of this
  homomorphism's kernel has the same size, it suffices to prove that
  that it is surjective, or equivalently that each character on $H$
  extends to a character on $G$ at all.  For a proof of this fact see,
  e.g., \cite[p.\ 134]{Apostol}.
\end{proof}

\medskip

From \eqref{E:convolution} and Lemma
\ref{T:FT-isometry}(\ref{I:convolution}) it follows that the Fourier
transform diagonalizes $G$-circulant matrices.  In particular, if $M =
[f(ab^{-1})]_{a,b \in G}$ for $f \in \ell^2(G)$, then the eigenvalues
of $M$ are precisely the values $\bigl\{\widehat{f}(\chi) \mid \chi
\in \widehat{G}\bigr\}$ of the Fourier transform of $f$, and the
characters of $G$ are eigenvectors of $M$.  (For generalizations of
these facts for nonabelian $G$, see
\cite{Diaconis-book,Diaconis-matrices}.) Observe that every
$G$-circulant matrix is normal, but that $M$ is Hermitian if and only
if $f(a^{-1}) = \overline{f(a)}$ for each $a \in G$.

Given a family of random variables $\{ Y_a \mid a \in G\}$, define the
random function $f \in \ell^2(G)$ by $f(a) = \frac{1}{\sqrt{\abs{G}}}
Y_a$.  (We are avoiding using $X$ to name random variables because of
its typographical similarity to $\chi$.)  The corresponding
$G$-circulant matrix is the random matrix $M = \bigl[ Y_{ab^{-1}}
\bigr]_{a,b \in G}$.  Its eigenvalues, indexed by $\chi \in
\widehat{G}$, are given by
\begin{equation}\label{E:eigenvalue-formula}
  \lambda_\chi = \widehat{f}(\chi) = \frac{1}{\sqrt{\abs{G}}} 
  \sum_{a \in G} Y_a \chi(a),
\end{equation}
and the empirical spectral distribution of $M$ is
\[
\mu = \frac{1}{\bigl\vert \widehat{G} \bigr\vert} \sum_{\chi \in \widehat{G}}
\delta_{\lambda_\chi}
= \frac{1}{\abs{G}} \sum_{\chi \in \widehat{G}}
\delta_{\lambda_\chi},
\]
where $\delta_z$ here denotes the point mass at $z \in \C$.

The Fourier transform $\widehat{f}$ is a random trigonometric
polynomial on $G$, of the kind studied extensively by Marcus and
Pisier \cite{MaPi}.  From \eqref{E:eigenvalue-formula} it follows in
particular that $\norm{M} = \bigl\Vert \widehat{f} \bigr\Vert_\infty$,
where the former norm is the spectral norm of $M$.  The following
result is thus a special case of \cite[Theorem 1.4]{MaPi}, which also
applies to infinite compact abelian groups.

\begin{prop}\label{T:norm}
  Suppose that $\{Y_a \mid a \in G\}$ are independent (except possibly
  for a constraint $Y_{a^{-1}} = \overline{Y_a}$ for each $a \in G$)
  and mean $0$ with finite second moments. Then
  \[
  c \left(\min_{a \in G} \E \abs{Y_a}\right) \le 
  \frac{\E \norm{M}}{\sqrt{\log\abs{G}}} \le 
  C \sqrt{\max_{a \in G} \E \abs{Y_a}^2},
  \]
  where $c, C > 0$ are constants, independent of $G$ and the
  distributions of the $Y_a$.
\end{prop}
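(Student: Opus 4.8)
The plan is to recognize that $\norm{M} = \bigl\Vert \widehat{f} \bigr\Vert_\infty = \sup_{\chi \in \widehat{G}} \abs{\widehat{f}(\chi)}$, so the proposition is really a two-sided estimate on the expected supremum of the random trigonometric polynomial $\widehat{f}(\chi) = \frac{1}{\sqrt{\abs{G}}} \sum_{a \in G} Y_a \chi(a)$ over the (compact) dual group $\widehat{G}$. Since the excerpt explicitly states that this is a special case of \cite[Theorem 1.4]{MaPi}, the honest task is to verify that the Marcus--Pisier result applies in this setting and to unwind their normalization into the stated inequalities. First I would recall the form of the Marcus--Pisier theorem: for a random Fourier series $\sum_a Y_a \chi(a)$ with independent mean-zero coefficients on a compact abelian group, the expected supremum norm is comparable, up to universal constants, to a sum of an $L^1$-type lower contribution and an entropy/variance upper contribution. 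The main step is to match their hypotheses (independence of the $Y_a$, mean zero, finite second moments) with ours and to read off the explicit constants in front of $\min_a \E\abs{Y_a}$ and $\sqrt{\max_a \E\abs{Y_a}^2}$.

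\begin{proof}[Proof plan]
Since $\norm{M} = \bigl\Vert \widehat{f} \bigr\Vert_\infty$ with $\widehat{f}(\chi) = \frac{1}{\sqrt{\abs{G}}} \sum_{a \in G} Y_a \chi(a)$, the quantity $\E\norm{M}$ is the expected supremum over $\widehat{G}$ of this random trigonometric polynomial. The dual group $\widehat{G}$ is a finite (hence compact) abelian group, and the family $\{Y_a\}$ satisfies exactly the independence and moment hypotheses required by \cite[Theorem 1.4]{MaPi}. The Marcus--Pisier theorem then gives two-sided bounds on $\E \bigl\Vert \widehat{f} \bigr\Vert_\infty$ in terms of the coefficient distributions; the task is to extract the normalization. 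For the upper bound, the relevant ingredient is a metric entropy estimate for $\widehat{G}$ under the pseudometric induced by the increments of $\widehat{f}$, combined with a chaining argument; because $\abs{\widehat{G}} = \abs{G}$ by Lemma~\ref{T:FT-isometry}(\ref{I:onb}), the entropy of $\widehat{G}$ contributes the $\sqrt{\log \abs{G}}$ factor, while the coefficient scale is controlled by $\max_a \E\abs{Y_a}^2$. For the lower bound, a Sudakov-type minoration or a direct comparison argument produces a matching $\sqrt{\log\abs{G}}$ growth, now calibrated by $\min_a \E\abs{Y_a}$.

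I would organize the verification in three steps. First, confirm that the Hermitian constraint $Y_{a^{-1}} = \overline{Y_a}$ (when imposed) does not obstruct the application: it introduces a linear dependence among the coefficients but preserves the essential structure, since $\widehat{f}(\chi)$ remains real in that case and one can pass to an independent subfamily indexing $G$ modulo inversion without changing the order of magnitude of the relevant entropy or variance quantities. Second, match the normalizing factor $\frac{1}{\sqrt{\abs{G}}}$ in the definition of $f$ against the $\abs{G}$-dependence in the Marcus--Pisier bounds, so that the $\sqrt{\abs{G}}$'s cancel and leave the clean $\sqrt{\log\abs{G}}$ in the denominator. Third, absorb all numerical and universal geometric constants into the single pair $c, C > 0$, checking that these are genuinely independent of $G$ and of the particular laws of the $Y_a$, as both the entropy bound for a finite abelian group and the Sudakov minoration furnish universal constants.

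The main obstacle I anticipate is bookkeeping rather than conceptual: precisely translating the normalization conventions of \cite{MaPi} (which may state their result for a general random Fourier series without the $\frac{1}{\sqrt{\abs{G}}}$ prefactor, and may measure coefficient size differently) into the form written here, and verifying that the lower bound's dependence on $\min_a \E\abs{Y_a}$ rather than, say, $\min_a (\E\abs{Y_a}^2)^{1/2}$ is what their minoration actually yields. The asymmetry between $\E\abs{Y_a}$ in the lower bound and $\sqrt{\E\abs{Y_a}^2}$ in the upper bound is exactly the gap between $L^1$ and $L^2$ control of the coefficients that is intrinsic to the Marcus--Pisier estimates, so I would take care to cite the correct one-sided pieces of their theorem for each inequality rather than expecting a symmetric statement.
\end{proof}
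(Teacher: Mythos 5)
Your proposal takes essentially the same approach as the paper: the paper's entire ``proof'' consists of observing that $\norm{M} = \bigl\Vert \widehat{f} \bigr\Vert_\infty$ reduces the claim to a two-sided bound on the expected supremum of the random Fourier series \eqref{E:eigenvalue-formula} over $\widehat{G}$, and then citing \cite[Theorem 1.4]{MaPi} with no further detail. The verification steps you outline --- matching normalizations, handling the Hermitian constraint, and noting the intrinsic $L^1$-versus-$L^2$ asymmetry between the lower and upper bounds --- are precisely the bookkeeping the paper leaves implicit in that citation.
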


\medskip

The rest of this paper deals mainly with infinite sequences of finite
abelian groups $G^{(n)}$, always assumed to satisfy $\abs{G^{(n)}} \to
\infty$.  For each $n$ a family of random variables $\bigl\{Y_g^{(n)}
\mid g \in G^{(n)} \bigr\}$ will be used to construct a random
$G^{(n)}$-circulant matrix
\[
M^{(n)}= \left[\frac{1}{\sqrt{\abs{G^{(n)}}}} Y^{(n)}_{ab^{-1}}
  \right]_{a,b \in G^{(n)}}
\]
with empirical spectral measure $\mu^{(n)}$.  As mentioned earlier,
an important role will be played by the quantity
\[
p_2^{(n)} = \frac{\abs{\{ a \in G^{(n)} \mid a^2 = 1\}}}{\abs{G^{(n)}}}
=
\frac{\bigl\vert \bigl\{\chi \in \widehat{G} \mid \chi = \overline{\chi} \bigr
  \}\bigr\vert }
  {\bigl\vert\widehat{G}^{(n)}\bigr\vert}.
\]

The standard real Gaussian measure is denoted $\gamma_\R$, and the
standard complex Gaussian distribution, normalized such that $\E
\abs{Z}^2 = 1$ when $Z$ is a standard complex Gaussian random
variable, is denoted $\gamma_\C$. For $\alpha \in [0,1]$,
$\gamma_\alpha$ denotes the Gaussian measure on $\C \cong \R^2$ with
covariance
$\frac{1}{2} \bigl[ \begin{smallmatrix} 1+\alpha & 0 \\
  0 & 1-\alpha \end{smallmatrix}\bigr]$, so that in particular
$\gamma_0 = \gamma_\C$ and $\gamma_1 = \gamma_\R$.

The integral of a function $f$ with respect to a measure $\nu$ will be
denoted by $\nu(f)$.


\section{Gaussian matrix entries} \label{S:Gaussian}

The following is an immediate consequence of Lemma
\ref{T:FT-isometry}(\ref{I:isometry}) and the rotation-invariance of
the standard Gaussian distribution.  The special case of this result
for classical circulant matrices (that is, when $G$ is a cyclic group)
was observed in \cite{Meckes}.

\begin{prop}\label{T:C-Ginibre-eigenvalues}
  Let $G$ be a finite abelian group and let $\{Y_a \mid a \in G\}$ be
  independent, standard complex Gaussian random variables. Then the
  eigenvalues $\bigl\{ \lambda_\chi \mid \chi \in \widehat{G} \bigr\}$
  of $M$ given by \eqref{E:eigenvalue-formula} are independent,
  standard complex Gaussian random variables.
\end{prop}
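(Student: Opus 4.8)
The plan is to recognize the passage from the matrix entries to the eigenvalues as a single unitary transformation, and then to invoke the unitary (rotation) invariance of the standard complex Gaussian distribution on a finite-dimensional complex inner product space.

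First I would package the entries as one random vector by defining $h \in \ell^2(G)$ via $h(a) = Y_a$. Since the $Y_a$ are independent standard complex Gaussians and $\{\delta_a \mid a \in G\}$ is the standard orthonormal basis of $\ell^2(G)$, the function $h$ is a standard complex Gaussian random vector on $\ell^2(G) \cong \C^{\abs{G}}$. Next I would identify the vector of eigenvalues with the image of $h$ under the Fourier isometry. Writing $T \colon \ell^2(G) \to \ell^2(\widehat{G})$ for the map $f \mapsto \frac{1}{\sqrt{\abs{G}}} \widehat{f}$ of Lemma \ref{T:FT-isometry}(\ref{I:isometry}), a direct computation from \eqref{E:eigenvalue-formula} gives
\[
(Th)(\chi) = \frac{1}{\sqrt{\abs{G}}} \widehat{h}(\chi) = \frac{1}{\sqrt{\abs{G}}} \sum_{a \in G} Y_a \chi(a) = \lambda_\chi,
\]
so that $(\lambda_\chi)_{\chi \in \widehat{G}}$ is exactly $Th$ read off in the orthonormal basis $\{\delta_\chi \mid \chi \in \widehat{G}\}$ of $\ell^2(\widehat{G})$.

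Finally, because $T$ is a linear isometry of one finite-dimensional complex inner product space onto another, and the distribution of a standard complex Gaussian vector is invariant under unitary maps, $Th$ is again a standard complex Gaussian vector; its coordinates in the orthonormal basis $\{\delta_\chi\}$---namely the $\lambda_\chi$---are then automatically independent standard complex Gaussians, which is the claim. There is no serious obstacle here, and the author rightly calls the statement immediate; the only point deserving a moment's care is the normalization, i.e.\ that the factors $\frac{1}{\sqrt{\abs{G}}}$ are chosen precisely so that $T$ is unitary (equivalently so that $\E \abs{\lambda_\chi}^2 = 1$), which is exactly the content of Lemma \ref{T:FT-isometry}(\ref{I:isometry}).
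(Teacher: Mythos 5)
Your proof is correct and is essentially the paper's own argument: the paper states the proposition as an immediate consequence of Lemma \ref{T:FT-isometry}(\ref{I:isometry}) and the rotation-invariance of the standard complex Gaussian distribution, which is precisely what you have spelled out (identifying the eigenvalue vector as the image of the entry vector under the unitary Fourier map $T$). Your added care about the normalization factor $\frac{1}{\sqrt{\abs{G}}}$ making $T$ unitary is exactly the right point to check and matches the paper's conventions.
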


\medskip

The random matrix ensemble in Proposition
\ref{T:C-Ginibre-eigenvalues} is the $G$-circulant analogue of the
complex Ginibre ensemble $X$, which consists of a square matrix with
independent, standard complex Gaussian entries.

\begin{cor}\label{T:C-Ginibre-limit}
  Suppose that for each $n$, $\bigl\{Y_a^{(n)} \mid a \in G^{(n)}
  \bigr\}$ are independent, standard complex Gaussian random
  variables. Then $\E \mu^{(n)} = \gamma_\C$ for each $n$, and
  $\mu^{(n)} \to \gamma_\C$ weakly in probability.  Furthermore, if
  $\abs{G^{(n)}} = \Omega(n^{\eps})$ for some $\eps > 0$, then
  $\mu^{(n)} \to \gamma_\C$ weakly almost surely.
\end{cor}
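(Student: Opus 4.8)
The plan is to use Proposition~\ref{T:C-Ginibre-eigenvalues}, which tells us that the eigenvalues $\{\lambda_\chi \mid \chi \in \widehat{G}^{(n)}\}$ are independent standard complex Gaussians. This immediately gives the first assertion: since $\mu^{(n)} = \frac{1}{|G^{(n)}|} \sum_\chi \delta_{\lambda_\chi}$, for any bounded continuous test function $\varphi$ we have $\E \mu^{(n)}(\varphi) = \frac{1}{|G^{(n)}|} \sum_\chi \E \varphi(\lambda_\chi) = \gamma_\C(\varphi)$, because each $\lambda_\chi$ is distributed as $\gamma_\C$. Hence $\E\mu^{(n)} = \gamma_\C$ exactly, for every $n$.

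For weak convergence in probability, I would fix a bounded continuous $\varphi$ and show $\mu^{(n)}(\varphi) \to \gamma_\C(\varphi)$ in probability. The quantity $\mu^{(n)}(\varphi) = \frac{1}{|G^{(n)}|} \sum_\chi \varphi(\lambda_\chi)$ is an average of independent, identically distributed, bounded random variables $\varphi(\lambda_\chi)$, each with mean $\gamma_\C(\varphi)$. By independence, $\Var\bigl(\mu^{(n)}(\varphi)\bigr) = \frac{1}{|G^{(n)}|}\Var\bigl(\varphi(\lambda_\chi)\bigr) \le \frac{\norm{\varphi}_\infty^2}{|G^{(n)}|} \to 0$ since $|G^{(n)}| \to \infty$. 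Chebyshev's inequality then yields convergence in probability for each fixed $\varphi$, which is exactly weak convergence in probability. (One should phrase this via a countable convergence-determining family of test functions so that the various null sets can be combined, but this is routine.)

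For the almost sure statement under $|G^{(n)}| = \Omega(n^\eps)$, the natural approach is to strengthen the second-moment bound to a Borel--Cantelli argument. Since the $\varphi(\lambda_\chi)$ are bounded and independent, the deviation $\mu^{(n)}(\varphi) - \gamma_\C(\varphi)$ is a sum of independent bounded centered terms, so Hoeffding's (or Bernstein's) inequality gives $\Prob\bigl(\abs{\mu^{(n)}(\varphi) - \gamma_\C(\varphi)} > t\bigr) \le 2\exp\bigl(-c\,t^2 |G^{(n)}|\bigr)$. Under the growth hypothesis $|G^{(n)}| \ge c' n^\eps$, for any fixed $t > 0$ these probabilities are summable in $n$, so Borel--Cantelli gives $\mu^{(n)}(\varphi) \to \gamma_\C(\varphi)$ almost surely. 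Applying this to a fixed countable convergence-determining family of $\varphi$ and intersecting the resulting full-measure events yields weak convergence $\mu^{(n)} \to \gamma_\C$ almost surely.

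The main obstacle, such as it is, is not really analytic but bookkeeping: weak convergence must be reduced to convergence of integrals against a \emph{countable} determining class, and one must be careful that the almost sure conclusion requires summability of the tail probabilities (hence the subexponential concentration and the polynomial growth hypothesis), whereas the in-probability conclusion needs only variance $\to 0$. Everything rests on Proposition~\ref{T:C-Ginibre-eigenvalues} turning the empirical measure into an average of i.i.d.\ samples from $\gamma_\C$, which reduces the problem to a standard law-of-large-numbers argument; no genuine random matrix machinery is needed here because the Gaussian invariance has already done the work.
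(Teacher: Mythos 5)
Your proof is correct, and its skeleton is the same as the paper's: Proposition~\ref{T:C-Ginibre-eigenvalues} turns $\mu^{(n)}(\varphi)$ into an average of i.i.d.\ samples from $\gamma_\C$, the identity $\E\mu^{(n)} = \gamma_\C$ is then immediate, and both limits are established test function by test function over a countable convergence-determining class. You diverge in two places, and in both your route is the more elementary one. First, for the almost sure statement the paper invokes Gaussian concentration of measure: for $1$-Lipschitz $f$ it quotes the bound $\Prob\bigl[\abs{\mu^{(n)}(f) - \gamma_\C(f)} \ge t\bigr] \le 2e^{-\abs{G^{(n)}}t^2}$, whereas you use Hoeffding's inequality for bounded test functions. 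Both give exponential tails that are summable under $\abs{G^{(n)}} = \Omega(n^{\eps})$, and both function classes determine weak convergence; your version has the mild advantage of using only boundedness and independence rather than Gaussianity, so it would work verbatim whenever the eigenvalues are i.i.d.\ with the right law. Second, and more substantively, for convergence in probability \emph{without} the growth hypothesis the paper computes no variance at all: it runs a subsequence argument (every subsequence of $\mu^{(n)}$ admits a further subsequence along which $\abs{G^{(n_j)}} \ge j$, the a.s.\ argument applies along it, and subsequential a.s.\ convergence implies convergence in probability). You instead prove it directly from $\Var\bigl(\mu^{(n)}(\varphi)\bigr) \le \norm{\varphi}_\infty^2 / \abs{G^{(n)}} \to 0$ and Chebyshev, which is cleaner and more self-contained; what the paper's trick buys is getting the in-probability statement for free from the a.s.\ case, at the cost of transparency. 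Both arguments are complete.
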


\begin{proof}
  For each, say, Lipschitz $f: \C \to \R$,
  \[
  (\E\mu)(f) := \E \bigl(\mu(f)\bigr) 
  = \frac{1}{\abs{G}} \sum_{\chi \in \widehat{G}} \E f(\lambda_\chi),
  \]
  where the $(n)$ superscripts are omitted for simplicity.  By
  Proposition \ref{T:C-Ginibre-eigenvalues}, each $\lambda_\chi$ is
  distributed according to $\gamma_\C$, and so \( (\E\mu)(f) =
  \gamma_\C(f) \). Thus $\E \mu = \gamma_\C$.

  By the concentration properties of Gaussian measure (see
  \cite{Ledoux}), since the $\lambda_\chi$ are distributed as
  independent standard complex Gaussian random variables, if $f$ is
  $1$-Lipschitz, then
  \[
  \Prob \bigl[\abs{\mu(f) - \gamma_\C(f)} \ge t \bigr] 
  \le 2 e^{-\abs{G} t^2}
  \]
  for each $t > 0$. If $\abs{G^{(n)}} = \Omega(n^{\eps})$, then the
  Borel--Cantelli lemma implies that $\mu^{(n)}(f) \to \gamma_\C(f)$
  almost surely.  Applying this to a countable dense family of $f$, it
  follows that $\mu^{(n)} \to \gamma_\C$ weakly almost surely.
  
  In the general case, since $\abs{G^{(n)}} \to \infty$, each
  subsequence of $\mu^{(n)}$ has a subsequence $\mu^{(n_j)}$ for
  which, say, $\abs{G^{(n_j)}} \ge j$, so that by the above argument
  $\mu^{(n_j)}$ converges to $\gamma_\C$ almost surely as $j \to
  \infty$.  It follows that $\mu^{(n)}$ converges to $\gamma_\C$ in
  probability.
\end{proof}

\medskip

The next proposition deals with the $G$-circulant analogue of the
Gaussian Unitary Ensemble (GUE), which, up to a choice of
normalization, is distributed as $2^{-1/2}(X + X^*)$, where $X$ is the
complex Ginibre ensemble mentioned above.  Equivalently, the diagonal
entries of the GUE are standard real Gaussian random variables, the
off-diagonal entries are standard complex Gaussian random variables,
and the entries are independent except for the constraint that the
matrix is Hermitian.  It is worth noting explicitly that while each
entry of the GUE has (complex) variance 1, the variance of a diagonal
entry and the real part of an off-diagonal entry differ by a factor of
$2$.  (Again, the special case for classical circulant matrices was
observed earlier in \cite{Meckes}.)

\begin{prop}\label{T:GUE-eigenvalues}
  Let $G$ be a finite abelian group and let $\{Y_a \mid a \in G \}$ be
  random variables which are independent except for the constraint
  $Y_{a^{-1}} = \overline{Y_a}$, and such that
  \[
  Y_a \sim \begin{cases} \gamma_\R & \text{ if } a^2 = 1,\\
    \gamma_\C & \text{ if } a^2 \neq 1.
  \end{cases}
  \]
  Then the eigenvalues $\bigl\{ \lambda_\chi \mid \chi \in \widehat{G}
  \bigr\}$ of $M$ given by \eqref{E:eigenvalue-formula} are
  independent, standard real Gaussian random variables.
\end{prop}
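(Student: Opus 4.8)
The plan is to show that the real-valued random vector $(\lambda_\chi)_{\chi \in \widehat{G}}$ is jointly Gaussian with mean zero and covariance equal to the identity matrix; since a jointly Gaussian family whose covariance matrix is diagonal is independent, this yields the assertion. Conceptually this is the real counterpart of Proposition~\ref{T:C-Ginibre-eigenvalues}: the Fourier transform is a (real) isometry from the space of Hermitian symbols onto the real-valued functions on $\widehat{G}$, and the rotation-invariance of Gaussian measure does the rest. I would, however, carry it out as a direct second-moment computation.

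First I would record that each $\lambda_\chi$ is actually real. The hypothesis $Y_{a^{-1}} = \overline{Y_a}$ is precisely the condition $f(a^{-1}) = \overline{f(a)}$, so $M$ is Hermitian and its eigenvalues are real; equivalently one checks directly from \eqref{E:eigenvalue-formula}, using $\overline{\chi(a^{-1})} = \chi(a)$, that $\overline{\lambda_\chi} = \lambda_\chi$. Next I would establish joint Gaussianity: picking one representative $a$ from each pair $\{a, a^{-1}\}$ with $a^2 \neq 1$, the variables $\{Y_a : a^2 = 1\}$ together with $\{\Re Y_a, \Im Y_a\}$ over the chosen representatives form a family of mutually independent centered real Gaussians, and the remaining $Y_{a^{-1}} = \overline{Y_a}$ are determined by them. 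Each $\lambda_\chi$ is a fixed complex-linear combination of the $Y_a$, hence---being real---a real-linear combination of this underlying independent Gaussian family, so $(\lambda_\chi)_\chi$ is jointly Gaussian with $\E \lambda_\chi = 0$.

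It remains to compute the covariance, and the crux is the orthonormality relation $\E[Y_a \overline{Y_b}] = \delta_{a,b}$. I would verify this by cases: writing $\overline{Y_b} = Y_{b^{-1}}$, independence forces $\E[Y_a Y_{b^{-1}}] = 0$ unless $b \in \{a, a^{-1}\}$, and a short check shows the only surviving case is $b = a$, where $\E[Y_a \overline{Y_a}] = \E\abs{Y_a}^2 = 1$ in both the $\gamma_\R$ and the $\gamma_\C$ regimes. Feeding this into \eqref{E:eigenvalue-formula} and invoking the orthogonality of characters from Lemma~\ref{T:FT-isometry}(\ref{I:onb}) gives
\begin{align*}
  \E[\lambda_\chi \overline{\lambda_{\chi'}}]
  &= \frac{1}{\abs{G}} \sum_{a,b \in G} \E[Y_a \overline{Y_b}]\, \chi(a) \overline{\chi'(b)} \\
  &= \frac{1}{\abs{G}} \sum_{a \in G} \chi(a) \overline{\chi'(a)} = \delta_{\chi, \chi'}.
\end{align*}
Because the $\lambda_\chi$ are real this equals the real covariance $\E[\lambda_\chi \lambda_{\chi'}]$, so the covariance matrix is the identity and joint Gaussianity upgrades uncorrelatedness to full independence.

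The main obstacle I anticipate is the second-moment bookkeeping: one must track carefully how the pairing $a \leftrightarrow a^{-1}$ interacts with the distinction between $\E\abs{Y_a}^2$ and the pseudo-second-moment $\E[Y_a^2]$ (which equals $1$ for a real Gaussian but vanishes for a complex one), and this is exactly where the factor-of-$2$ discrepancy between diagonal and off-diagonal variances noted before the proposition is needed to make $\E[Y_a \overline{Y_b}]$ uniformly equal to $\delta_{a,b}$. One must also be sure to argue joint rather than merely pairwise Gaussianity, since only the former lets uncorrelatedness imply independence.
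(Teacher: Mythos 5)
Your proof is correct, but it takes a genuinely different route from the paper's. The paper gives a two-line coupling argument: if $\{Z_a \mid a \in G\}$ are independent standard complex Gaussians, then $\{Y_a\}$ has the same joint distribution as $\bigl\{2^{-1/2}\bigl(Z_a + \overline{Z_{a^{-1}}}\bigr) \mid a \in G\bigr\}$, so each $\lambda_\chi$ is distributed as $\sqrt{2}$ times the real part of the corresponding eigenvalue in Proposition \ref{T:C-Ginibre-eigenvalues}; since those are independent standard complex Gaussians, the $\lambda_\chi$ are independent $\Normal(0,1)$ variables. You instead give a self-contained second-moment argument: realness of each $\lambda_\chi$ from the Hermitian constraint, joint Gaussianity of $(\lambda_\chi)_{\chi}$ as a real-linear image of an underlying independent real Gaussian family, and the covariance identity $\E\bigl[Y_a \overline{Y_b}\bigr] = \delta_{a,b}$ --- whose only delicate case, $b = a^{-1} \neq a$, vanishes precisely because $\E Y_a^2 = 0$ for $Y_a \sim \gamma_\C$ --- which together with character orthonormality, Lemma \ref{T:FT-isometry}(\ref{I:onb}), gives identity covariance, and uncorrelated jointly Gaussian real variables are independent. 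The paper's approach is shorter, reuses the rotation-invariance already exploited for the complex Ginibre case, and makes the origin of the factor $\sqrt{2}$ and the analogy with the GUE $2^{-1/2}(X+X^*)$ transparent; your approach costs more bookkeeping but buys robustness, since it is exactly the Gaussian instance ($\alpha = 0$, $\beta = 1$) of the covariance computation \eqref{E:semicircle-covariance} that the paper later performs in proving Theorems \ref{T:semicircle-law-general} and \ref{T:semicircle-law-special}, and so it generalizes directly to non-Gaussian entries once exact Gaussianity is replaced by a central limit theorem, whereas the coupling is special to Gaussian entries. One small point of emphasis: the ``factor-of-$2$ discrepancy'' you invoke enters your computation only through the uniform normalization $\E\abs{Y_a}^2 = 1$ for all $a$ (diagonal real variance $1$ versus off-diagonal real-part variance $1/2$); with that reading, your case analysis is exactly right.
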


\begin{proof}
  Let $\{Z_a \mid a \in G\}$ be independent, standard complex Gaussian
  random variables. Then $\{Y_a \mid a \in G\}$ are distributed as
  $\bigl\{ 2^{-1/2}\bigl(Z_a + \overline{Z_{a^{-1}}}\bigr) \mid a \in
  G\bigr\}$. Thus the eigenvalues $\lambda_\chi$ of $M$ in the present
  proposition are jointly distributed as $\sqrt{2}$ times the real
  parts of the eigenvalues of the random matrix defined in Proposition
  \ref{T:C-Ginibre-eigenvalues}, and are thus independent real
  standard normal random variables.
\end{proof}

\medskip

Observe that in the ``$G$-circulant GUE'' of Proposition
\ref{T:GUE-eigenvalues}, every element $a \in G$ with $a = a^{-1}$
corresponds to a ``diagonal'' of $M$ in which the entries are
constrained to be real.

The following corollary follows from Proposition
\ref{T:GUE-eigenvalues} in the same way that Corollary
\ref{T:C-Ginibre-limit} follows from Proposition
\ref{T:C-Ginibre-eigenvalues}.

\begin{cor}\label{T:GUE-limit}
  Suppose that for each $n$, $\bigl\{Y_a^{(n)} \mid a \in G^{(n)}
  \bigr\}$ are real and complex Gaussian random variables as described
  in Proposition \ref{T:GUE-eigenvalues}. Then $\E \mu^{(n)} =
  \gamma_\R$ for each $n$, and $\mu^{(n)} \to \gamma_\R$ weakly in
  probability.  Furthermore, if $\abs{G^{(n)}} = \Omega(n^{\eps})$ for
  some $\eps > 0$, then $\mu^{(n)} \to \gamma_\R$ weakly almost
  surely.
\end{cor}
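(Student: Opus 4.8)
The plan is to follow the proof of Corollary \ref{T:C-Ginibre-limit} essentially verbatim, replacing $\gamma_\C$ by $\gamma_\R$ throughout, since Proposition \ref{T:GUE-eigenvalues} is the exact analogue of Proposition \ref{T:C-Ginibre-eigenvalues}: the eigenvalues $\bigl\{\lambda_\chi \mid \chi \in \widehat{G}^{(n)}\bigr\}$ are now independent \emph{real} standard Gaussians rather than complex ones. First I would compute the mean measure. For a Lipschitz $f \colon \C \to \R$, linearity of expectation gives $(\E\mu)(f) = \frac{1}{\abs{G}} \sum_{\chi} \E f(\lambda_\chi)$ (suppressing the $(n)$ superscripts), and since each $\lambda_\chi \sim \gamma_\R$ by Proposition \ref{T:GUE-eigenvalues}, every summand equals $\gamma_\R(f)$. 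Hence $(\E\mu)(f) = \gamma_\R(f)$ for all such $f$, so $\E\mu^{(n)} = \gamma_\R$ for each $n$.

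The second step is concentration. Fixing a $1$-Lipschitz $f$, I would view $\mu(f) = \frac{1}{\abs{G}}\sum_{\chi} f(\lambda_\chi)$ as a function of the standard Gaussian vector $(\lambda_\chi)_{\chi} \in \R^{\abs{G}}$. Because $f$ is $1$-Lipschitz, this function is Lipschitz with respect to the Euclidean metric with constant at most $\abs{G}^{-1/2}$. The Gaussian concentration inequality (see \cite{Ledoux}) then yields a bound of the form $\Prob\bigl[\abs{\mu(f) - \gamma_\R(f)} \ge t\bigr] \le 2 e^{-c\abs{G} t^2}$ for each $t > 0$, with $c > 0$ an absolute constant. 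Under the hypothesis $\abs{G^{(n)}} = \Omega(n^\eps)$ the Borel--Cantelli lemma gives $\mu^{(n)}(f) \to \gamma_\R(f)$ almost surely, and applying this to a countable family of $1$-Lipschitz $f$ dense in the weak topology upgrades this to $\mu^{(n)} \to \gamma_\R$ weakly almost surely.

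For the general case, without the growth hypothesis, I would reuse the subsequence argument from Corollary \ref{T:C-Ginibre-limit}: since $\abs{G^{(n)}} \to \infty$, every subsequence of $\mu^{(n)}$ contains a further subsequence along which $\abs{G^{(n_j)}} \ge j$, and the preceding paragraph applies to that subsequence to give almost sure weak convergence to $\gamma_\R$; this forces $\mu^{(n)} \to \gamma_\R$ weakly in probability.

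Since every nontrivial step is identical in form to the complex case, I expect no genuine obstacle. The only points meriting care are the verification of the $\abs{G}^{-1/2}$ Lipschitz bound used in the concentration estimate, and the standard measure-theoretic argument that almost sure convergence of $\mu^{(n)}(f)$ along a countable dense family of test functions implies almost sure weak convergence of the measures themselves.
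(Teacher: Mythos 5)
Your proposal is correct and matches the paper exactly: the paper's entire proof of this corollary is the remark that it ``follows from Proposition \ref{T:GUE-eigenvalues} in the same way that Corollary \ref{T:C-Ginibre-limit} follows from Proposition \ref{T:C-Ginibre-eigenvalues},'' which is precisely the substitution of independent real standard Gaussian eigenvalues (and $\gamma_\R$) into the mean computation, Gaussian concentration, Borel--Cantelli, and subsequence arguments that you spell out. No gaps; the $\abs{G}^{-1/2}$-Lipschitz bound you flag is the same Cauchy--Schwarz estimate the paper uses elsewhere (e.g.\ in the proof of Theorem \ref{T:as-convergence}).
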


\medskip

The real Ginibre ensemble $X$ consists of a square matrix with
independent, real standard Gaussian random variables.  The Gaussian
Orthogonal Ensemble (GOE) is distributed as $2^{-1/2}(X+X^t)$.
Equivalently, the diagonal entries of the GOE are distributed as
$\Normal(0,2)$ and the off-diagonal entries are distributed as
$\Normal(0,1)$.  In general the analogues of Propositions
\ref{T:C-Ginibre-eigenvalues} and \ref{T:GUE-eigenvalues} for matrices
with real entries are less elegant. In the nonsymmetric case the
eigenvalues have a Gaussian joint distribution in a
$\abs{G}$-dimensional real subspace of $\C^{\abs{G}}$, and in the
symmetric case the $\abs{G}$ eigenvalues are not independent in
general.  We will not state such results in general, but will note for
future reference that in the ``$G$-circulant GOE'', every element $a
\in G$ with $a = a^{-1}$ corresponds to a diagonal of $M$ in which the
variance of the entries is $2$ instead of $1$.  (See Theorem
\ref{T:semicircle-law-general} below and the discussion following it.)

On the other hand, the analogous results \emph{are} simple in the case
in which the characters $\chi \in \widehat{G}$ are all real-valued, so
that the Fourier transform defines an isometry (up to scaling) between
the \emph{real} $\ell^2$ spaces on $G$ and $\widehat{G}$.  By Lemma
\ref{T:p2}, this is the case precisely when every $a \in G$ satisfies
$a^2 = 1$, or in other words, when $G \cong (\Z_2)^n$ for some $n$.
In this case a $G$-circulant matrix is automatically symmetric, so
that there is no difference (except for scaling) between the
``$G$-circulant real Ginibre ensemble'' and the ``$G$-circulant GOE''.
The following results are proved in the same way as Proposition
\ref{T:C-Ginibre-eigenvalues} and Corollary \ref{T:C-Ginibre-limit}.

\begin{prop}\label{T:Z2-GOE-eigenvalues}
  Let $G \cong (\Z_2)^n$ and let $\{Y_a \mid a \in G\}$ be
  independent, standard real Gaussian random variables. Then the
  eigenvalues $\bigl\{ \lambda_\chi \mid \chi \in \widehat{G} \bigr\}$
  of $M$ given by \eqref{E:eigenvalue-formula} are independent,
  standard real Gaussian random variables.
\end{prop}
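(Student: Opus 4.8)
The plan is to mirror the proof of Proposition \ref{T:C-Ginibre-eigenvalues}, replacing the unitary invariance of the standard complex Gaussian distribution by the orthogonal invariance of the standard real Gaussian distribution. The key structural feature to exploit is the one already noted preceding the statement: since $G \cong (\Z_2)^n$, every $a \in G$ satisfies $a^2 = 1$, so by Lemma \ref{T:p2} every character $\chi \in \widehat{G}$ satisfies $\chi = \overline{\chi}$; that is, each $\chi$ is real-valued, taking values in $\{\pm 1\}$.

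First I would observe that, because the characters are real-valued, the eigenvalue formula \eqref{E:eigenvalue-formula} expresses each $\lambda_\chi = \frac{1}{\sqrt{\abs{G}}} \sum_{a \in G} Y_a \chi(a)$ as a \emph{real} linear combination of the real Gaussian random variables $Y_a$. Collecting these, the map $(Y_a)_{a \in G} \mapsto (\lambda_\chi)_{\chi \in \widehat{G}}$ is given by the real matrix $U = \frac{1}{\sqrt{\abs{G}}}\bigl[\chi(a)\bigr]_{\chi, a}$. I would then verify that $U$ is orthogonal: by Lemma \ref{T:FT-isometry}(\ref{I:onb}) the functions $\frac{1}{\sqrt{\abs{G}}}\chi$ form an orthonormal basis of $\ell^2(G)$, and since they are real-valued here, the same orthogonality relations exhibit them as an orthonormal basis of the \emph{real} Hilbert space of functions $G \to \R$. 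Equivalently, the isometry of Lemma \ref{T:FT-isometry}(\ref{I:isometry}) restricts to a real linear isometry whose matrix is exactly $U$, so $U$ is a genuine orthogonal matrix.

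Finally, the vector $(Y_a)_{a \in G}$ is a standard Gaussian vector in $\R^{\abs{G}}$, and the standard real Gaussian distribution is invariant under orthogonal transformations. Therefore $(\lambda_\chi)_{\chi \in \widehat{G}} = U (Y_a)_{a \in G}$ is again a standard Gaussian vector in $\R^{\abs{G}}$, which is precisely the assertion that the $\lambda_\chi$ are independent standard real Gaussian random variables. I do not anticipate a genuine obstacle here; the only point requiring care—and the exact analogue of requiring a unitary transformation in the complex case—is the verification that $U$ is real orthogonal, which hinges entirely on the characters of $(\Z_2)^n$ being real-valued.
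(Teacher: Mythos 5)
Your proof is correct and follows exactly the paper's route: the paper proves this proposition ``in the same way as Proposition \ref{T:C-Ginibre-eigenvalues},'' i.e., via the Fourier isometry of Lemma \ref{T:FT-isometry}(\ref{I:isometry})---which, as the paper notes just before the statement, restricts to a \emph{real} isometry precisely because all characters of $(\Z_2)^n$ are real-valued---combined with the rotation (orthogonal) invariance of the standard real Gaussian distribution. Your verification that the matrix $U$ is real orthogonal is exactly the point the paper's discussion flags as the reason this case is simple.
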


\begin{cor}\label{T:Z2-GOE-limit}
  Suppose that for each $n$, $G^{(n)} \cong (\Z_2)^n$ and $\{Y_a^{(n)}
  \mid a \in G^{(n)}\}$ are independent, standard real Gaussian random
  variables. Then $\E \mu^{(n)} = \gamma_\R$ for each $n$, and
  $\mu^{(n)} \to \gamma_\R$ weakly almost surely.
\end{cor}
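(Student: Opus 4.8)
The plan is to derive this corollary from Proposition~\ref{T:Z2-GOE-eigenvalues} in exactly the way Corollary~\ref{T:C-Ginibre-limit} was derived from Proposition~\ref{T:C-Ginibre-eigenvalues}, the only differences being that the common law of the eigenvalues is now $\gamma_\R$ rather than $\gamma_\C$, and that here $\abs{G^{(n)}} = 2^n$ grows exponentially, so the growth hypothesis needed for almost sure convergence is automatic. By Proposition~\ref{T:Z2-GOE-eigenvalues}, the eigenvalues $\bigl\{\lambda_\chi \mid \chi \in \widehat{G}^{(n)}\bigr\}$ are independent standard real Gaussian random variables; thus $\mu^{(n)}$ is simply the empirical measure of $2^n$ i.i.d.\ $\gamma_\R$ random variables, and everything reduces to a statement about such empirical measures on $\R$.

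For the first assertion, I would fix a Lipschitz $f \colon \R \to \R$. Since
\[
(\E\mu)(f) = \frac{1}{\abs{G}} \sum_{\chi \in \widehat{G}} \E f(\lambda_\chi)
\]
and each $\lambda_\chi$ is distributed according to $\gamma_\R$, every summand equals $\gamma_\R(f)$, whence $(\E\mu)(f) = \gamma_\R(f)$; letting $f$ range over a suitable test class gives $\E\mu^{(n)} = \gamma_\R$ for each $n$. (Here the $(n)$ superscripts are suppressed.)

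For the almost sure convergence, I would fix a $1$-Lipschitz $f$ and regard $\mu(f) = \frac{1}{\abs{G}}\sum_\chi f(\lambda_\chi)$ as a function of the Gaussian vector $(\lambda_\chi)_\chi \in \R^{\abs{G}}$; it is Lipschitz with constant at most $\abs{G}^{-1/2}$, so Gaussian concentration (see \cite{Ledoux}) yields
\[
\Prob\bigl[\abs{\mu(f) - \gamma_\R(f)} \ge t\bigr] \le 2 e^{-\abs{G} t^2 / 2}
\]
for each $t > 0$. Because $\abs{G^{(n)}} = 2^n$, the series $\sum_n e^{-2^n t^2/2}$ converges for every $t > 0$, so the Borel--Cantelli lemma gives $\mu^{(n)}(f) \to \gamma_\R(f)$ almost surely, with no need to pass to a subsequence. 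Applying this to a countable dense family of $1$-Lipschitz $f$ and intersecting the corresponding probability-one events yields $\mu^{(n)} \to \gamma_\R$ weakly almost surely.

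I expect essentially no obstacle here beyond bookkeeping: the structural input (that the eigenvalues are i.i.d.\ real Gaussians) is already supplied by Proposition~\ref{T:Z2-GOE-eigenvalues}, and the probabilistic input (Gaussian concentration plus Borel--Cantelli) is identical to that used for Corollary~\ref{T:C-Ginibre-limit}. The one point worth flagging is that the exponential growth $\abs{G^{(n)}} = 2^n$ is what lets one assert almost sure convergence unconditionally, in contrast to Corollary~\ref{T:C-Ginibre-limit}, where the weaker hypothesis $\abs{G^{(n)}} = \Omega(n^{\eps})$ had to be imposed precisely to make the concentration bounds summable.
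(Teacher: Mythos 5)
Your proposal is correct and follows exactly the route the paper intends: the paper states that this corollary is ``proved in the same way as \dots\ Corollary \ref{T:C-Ginibre-limit},'' which is precisely what you do---i.i.d.\ real Gaussian eigenvalues from Proposition \ref{T:Z2-GOE-eigenvalues}, Gaussian concentration, and Borel--Cantelli over a countable dense class of Lipschitz test functions. Your observation that the exponential growth $\abs{G^{(n)}} = 2^n$ makes the summability (and hence unconditional almost sure convergence) automatic, with no need for the $\Omega(n^{\eps})$ hypothesis or the subsequence argument, is exactly the point that distinguishes this corollary from Corollary \ref{T:C-Ginibre-limit}.
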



\section{General matrix entries} \label{S:general}

Our main results are stated under a Lindeberg-type condition on the
random variables $Y_a^{(n)}$ used to generate the random matrices:
\begin{equation}\label{E:Lindeberg-condition}
  \forall \eps > 0 : \quad 
  \lim_{n \to \infty} \frac{1}{\abs{G^{(n)}}} 
  \sum_{a\in G^{(n)}} \E \Bigl( \bigl\vert Y_a^{(n)}\bigr\vert^2 
  \ind{\vert Y_a^{(n)}\vert 
    \ge \eps \sqrt{\vert G^{(n)} \vert }} \Bigr) 
  = 0.
\end{equation}
The usual remarks apply about the sufficiency of identical
distribution or a Lyapunov-type condition:
\eqref{E:Lindeberg-condition} holds in the settings of Theorems
\ref{T:circular-law-correlated} and \ref{T:circular-law-uncorrelated}
if all the $Y_a^{(n)}$ are identically distributed, or have uniformly
bounded $(2 + \delta)$ moments; it holds in the settings of Theorems
\ref{T:semicircle-law-general} and \ref{T:semicircle-law-special} if
all the random variables with a given variance assumption satisfy such
assumptions.

We now state our main results, deferring the proofs until the end of
the section.

\begin{thm}\label{T:circular-law-correlated}
  Let $\alpha \in [0,1]$. Suppose that for each $n$, $\{Y_a^{(n)} \mid
  a \in G^{(n)}\}$ are independent; that
  \[
  \E Y_a^{(n)} = 0, \quad \E \bigl\vert Y_a^{(n)}\bigr\vert^2 = 1, \quad
  \text{and} \quad \E \bigl(Y_a^{(n)}\bigr)^2 = \alpha
  \]
  for every $a \in G^{(n)}$; and that \eqref{E:Lindeberg-condition}
  holds. Suppose further that $\lim_{n\to \infty} p_2^{(n)} = p$
  exists.  Then $\mu^{(n)}$ converges, in mean and in probability, to
  $(1 - p) \gamma_\C + p \gamma_\alpha$.
\end{thm}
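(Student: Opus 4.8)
The plan is to pass to characteristic functions. For $w \in \C$ set $g_w(z) = e^{i\Re(\overline{w}z)}$, so that $\mu^{(n)}(g_w) = \frac{1}{\abs{G}}\sum_{\chi}g_w(\lambda_\chi)$ is the characteristic function of the random measure $\mu^{(n)}$ evaluated at $w$ (omitting the superscripts $(n)$). Since $\E\mu^{(n)}\bigl(\abs{\cdot}^2\bigr) = \frac{1}{\abs G}\sum_\chi \E\abs{\lambda_\chi}^2 = 1$ by \eqref{E:eigenvalue-formula} and the normalization $\E\abs{Y_a}^2 = 1$, the measures are uniformly tight, and the family $\{g_w\}$ is convergence-determining; it therefore suffices to control $\mu^{(n)}(g_w)$ for each fixed $w$, first in mean and then in probability.

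For the mean, write $\E\mu^{(n)}(g_w) = \frac{1}{\abs G}\sum_\chi \E g_w(\lambda_\chi)$. By \eqref{E:eigenvalue-formula} and independence, $\E g_w(\lambda_\chi) = \prod_{a}\E\exp\bigl(\tfrac{i}{\sqrt{\abs G}}\Re(\overline{w}\chi(a)Y_a)\bigr)$, and a Lindeberg--Feller estimate shows this equals $\exp\bigl(-\tfrac12\sigma_\chi^2(w)\bigr) + o(1)$ with $\sigma_\chi^2(w) = \frac{1}{\abs G}\sum_a \E\bigl[\Re(\overline{w}\chi(a)Y_a)\bigr]^2$. Using $\E Y_a = 0$, $\E\abs{Y_a}^2 = 1$ and $\E Y_a^2 = \alpha$ one computes $\sigma_\chi^2(w) = \tfrac12\abs{w}^2 + \tfrac{\alpha}{2}\Re(\overline{w}^2)\,\tfrac{1}{\abs G}\sum_a\chi(a)^2$; by orthogonality of characters $\tfrac{1}{\abs G}\sum_a\chi(a)^2 = \ind{\chi=\overline{\chi}}$. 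Hence the limiting factor is $\widehat{\gamma_\alpha}(w)$ when $\chi = \overline{\chi}$ and $\widehat{\gamma_\C}(w)$ otherwise. The essential point is that the $o(1)$ error in the Lindeberg estimate involves the $Y_a$ only through $\abs{\chi(a)} = 1$, so it is uniform in $\chi$ and tends to $0$ by \eqref{E:Lindeberg-condition}. Since the fraction of characters with $\chi = \overline{\chi}$ is exactly $p_2^{(n)} \to p$ (Lemma \ref{T:p2}), summing gives $\E\mu^{(n)}(g_w) \to p\,\widehat{\gamma_\alpha}(w) + (1-p)\widehat{\gamma_\C}(w)$, the characteristic function of $(1-p)\gamma_\C + p\gamma_\alpha$; this is the claimed convergence in mean.

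For convergence in probability I would bound $\E\abs{\mu^{(n)}(g_w)}^2 - \abs{\E\mu^{(n)}(g_w)}^2 = \frac{1}{\abs G^2}\sum_{\chi,\psi}\Cov\bigl(g_w(\lambda_\chi),\overline{g_w(\lambda_\psi)}\bigr)$. The joint characteristic function of $(\lambda_\chi,\lambda_\psi)$ again factors over $a$, and the same computation shows the covariance is $o(1)$ unless a cross term survives; by orthogonality the only surviving cross terms are $\ind{\chi=\psi}$ and $\ind{\chi=\overline{\psi}}$, which force $\psi \in \{\chi,\overline{\chi}\}$. There are only $O(\abs G)$ such pairs, each with covariance at most $2$ in modulus, contributing $O(\abs G^{-1})$. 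For the remaining $\approx \abs G^2$ pairs the increment in the joint exponent is $\overline{w}\bigl(\chi(a)-\psi(a)\bigr)$, of modulus at most $2\abs w$, so the Lindeberg bound is uniform over all such pairs and the covariance is $o(1)$ uniformly. Dividing by $\abs G^2$ gives $\Var\bigl(\mu^{(n)}(g_w)\bigr)\to 0$, whence $\mu^{(n)}(g_w)$ converges in probability to the same limit as its mean. Applying this along a countable dense set of $w$ and invoking Lévy's continuity theorem together with the tightness above upgrades this to $\mu^{(n)} \to (1-p)\gamma_\C + p\gamma_\alpha$ weakly in probability.

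The crux, and the step I expect to demand the most care, is the pair of uniform central limit estimates: controlling $\sup_\chi$ and $\sup_{\chi,\psi}$ of the error in approximating the marginal and joint characteristic functions by their Gaussian limits. Both reduce to the observation that replacing $Y_a$ by $\chi(a)Y_a$, or by $\bigl(\chi(a)-\psi(a)\bigr)Y_a$, changes the quantity appearing in the Lindeberg sum \eqref{E:Lindeberg-condition} only by the bounded factors $\abs{\chi(a)} = 1$ and $\abs{\chi(a)-\psi(a)}\le 2$; this is exactly what allows a single convergence rate to serve simultaneously for all characters and all pairs.
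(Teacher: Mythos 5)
Your strategy---characteristic functions in place of bounded-Lipschitz test functions---is a genuinely different route from the paper's, and its core is sound. The paper applies a quantitative multivariate Lindeberg theorem (Proposition \ref{T:Lindeberg}, from Bhattacharya--Rao) directly to $\E f(\lambda_\chi)$ and $\E f(\lambda_{\chi_1})f(\lambda_{\chi_2})$ for $\norm{f}_{BL}\le 1$, obtaining $L^2$ convergence of $\mu^{(n)}(f)$ for each bounded Lipschitz $f$; since that class is convergence-determining, convergence in mean and in probability follow with no further work. Your computations parallel the paper's in all essentials: the variance $\sigma_\chi^2(w)$ and the dichotomy $\gamma_\alpha$ versus $\gamma_\C$ according to whether $\chi=\overline{\chi}$ (the paper's covariance computation via \eqref{E:Re-Im} and orthogonality of $\chi$ and $\overline{\chi}$), the count of real characters via Lemma \ref{T:p2}, the uniformity of the CLT error coming only from $\abs{\chi(a)}=1$ and $\abs{\chi(a)-\psi(a)}\le 2$, and the fact that only the pairs $\psi\in\{\chi,\overline{\chi}\}$ survive in the variance, contributing $O(\abs{G}^{-1})$. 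Your route even has a small advantage: when $\alpha=1$ and $\chi=\overline{\chi}$ the covariance of $\lambda_\chi$ is singular, forcing the paper into a case split (treating $\lambda_\chi$ as a real variable), whereas characteristic functions handle degenerate Gaussians seamlessly. The convergence-in-mean part of your argument is complete as stated, since $\E\mu^{(n)}(g_w)$ is the characteristic function of the deterministic measure $\E\mu^{(n)}$, it converges at \emph{every} $w$, and the classical L\'evy continuity theorem applies.

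The gap is in your final step, for convergence in probability. The ``tightness'' you establish is only $\E\mu^{(n)}\bigl(\abs{z}^2\bigr)=1$, i.e.\ tightness of the \emph{expected} measures, and L\'evy's continuity theorem is a statement about deterministic sequences whose characteristic functions converge at every point. What you actually have is convergence in probability of $\mu^{(n)}(g_w)$ for each fixed $w$; to upgrade this to weak convergence in probability along your suggested lines, one passes to a subsequence along which convergence holds almost surely simultaneously for all $w$ in a countable dense set $D$---and at that point one needs tightness of the \emph{realized} random measures on that event, which the first-moment bound does not supply. The implication you need is genuinely false without it: there exist probability measures $\mu_k=\delta_{x_k}$ with $x_k\to\infty$, chosen by simultaneous Diophantine approximation so that $e^{i\Re(\overline{w}x_k)}\to 1$ for every $w$ in a prescribed countable dense set, whose characteristic functions thus converge on $D$ to the characteristic function of $\delta_0$ while the measures converge weakly to nothing. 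Two standard repairs: (i) observe via Lemma \ref{T:FT-isometry}(\ref{I:isometry}) that $\mu^{(n)}\bigl(\abs{z}^2\bigr)=\frac{1}{\abs{G^{(n)}}}\sum_{a}\bigl\vert Y_a^{(n)}\bigr\vert^2$, which converges to $1$ in probability by a weak law of large numbers valid under exactly \eqref{E:Lindeberg-condition}; this yields almost-sure tightness along suitable subsequences, after which your dense-set argument closes. Or (ii) avoid subsequences: mollify, writing a compactly supported Lipschitz $f$ as $f*\rho_\sigma+O(\sigma)$ and expressing $\mu^{(n)}(f*\rho_\sigma)$ by Fourier inversion as an integral of $\hat{f}\,\hat{\rho}_\sigma$ against $w\mapsto\mu^{(n)}(g_w)$, so that dominated convergence and your pointwise $L^2$ bounds give $\mu^{(n)}(f)\to\nu(f)$ in $L^1$. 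Either repair is routine, but neither is the step you wrote; as stated, the conclusion in probability does not follow from what precedes it.
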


\medskip

One of the main special cases of interest in Theorem
\ref{T:circular-law-correlated} is when $\alpha = 1$, that is, when
the matrix entries are all real. In that case, the limiting spectral
distribution of $M^{(n)}$ is complex Gaussian if the number of $a$
with $a^2 = 1$ is negligible for large $n$. On the other hand, if
the fraction of such $a$ is asymptotically constant then, due to the
presence of many real-valued characters $\chi$, the limiting spectral
distribution will be a mixture of $\gamma_\C$ and $\gamma_\R$.

The other main special case of interest is when $\alpha = 0$, so that
the matrix entries have uncorrelated real and imaginary parts.  In
that case, which generalizes the setting of Corollary
\ref{T:C-Ginibre-limit}, one can remove the assumption that
$p_2^{(n)}$ approaches a limit.

\begin{thm}\label{T:circular-law-uncorrelated}
  Suppose that for each $n$, $\{Y_a^{(n)} \mid a \in G^{(n)}\}$ are
  independent; that
  \[
  \E Y_a^{(n)} = 0, \quad \E \bigl\vert Y_a^{(n)}\bigr\vert^2 = 1,
  \quad \text{and} \quad \E \bigl(Y_a^{(n)}\bigr)^2 = 0
  \]
  for every $a \in G^{(n)}$; and that \eqref{E:Lindeberg-condition}
  holds. Then $\mu^{(n)}$ converges, in mean and in probability, to
  $\gamma_\C$.
\end{thm}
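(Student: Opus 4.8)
The plan is to reduce everything to a Lindeberg--Feller central limit theorem for the eigenvalues $\lambda_\chi$ in \eqref{E:eigenvalue-formula}, using that each summand $\frac{1}{\sqrt{\abs{G}}} Y_a \chi(a)$ has modulus $\frac{\abs{Y_a}}{\sqrt{\abs{G}}}$, independent of $\chi$ since $\abs{\chi(a)} = 1$. First I would record the covariance structure: writing $\inprod{\chi}{\chi'} = \sum_{a \in G} \chi(a)\overline{\chi'(a)} = \abs{G}\,\ind{\chi = \chi'}$ from the orthonormality in Lemma \ref{T:FT-isometry}(\ref{I:onb}), and using $\E Y_a = 0$, $\E\abs{Y_a}^2 = 1$, $\E Y_a^2 = 0$ together with independence, a direct computation gives for all $\chi, \chi' \in \widehat{G}$
\[
\E \lambda_\chi \overline{\lambda_{\chi'}} = \ind{\chi = \chi'}, \qquad \E \lambda_\chi \lambda_{\chi'} = 0.
\]
Thus every $\lambda_\chi$ has the covariance of a standard complex Gaussian ($\E\abs{\lambda_\chi}^2 = 1$, $\E\lambda_\chi^2 = 0$), with the \emph{same} isotropic $2 \times 2$ covariance $\frac{1}{2} I$ on $\C \cong \R^2$ for every $\chi$. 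This is the decisive point, and it is exactly where $\E Y_a^2 = 0$ enters: in the general case $\E Y_a^2 = \alpha$ the real-valued characters acquire a pseudo-variance $\alpha$ (forcing one to track $p_2^{(n)}$), whereas here the limiting covariance is the same for all $\chi$.

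For convergence in mean I would fix a smooth bounded $f \colon \C \to \R$ with bounded derivatives (such functions form a determining class for weak convergence) and write $(\E\mu)(f) = \frac{1}{\abs{G}}\sum_{\chi \in \widehat{G}} \E f(\lambda_\chi)$. Regarding $\lambda_\chi = \sum_{a} \frac{1}{\sqrt{\abs{G}}} Y_a \chi(a)$ as a sum of independent $\R^2$-valued variables, I would run the Lindeberg replacement argument, swapping each summand for a complex Gaussian with the same first and second moments; since these agree with those of $\frac{1}{\sqrt{\abs{G}}}$ times a standard complex Gaussian (because $\E Y_a = \E Y_a^2 = 0$, $\E\abs{Y_a}^2 = 1$) and the variances add to $1$, the fully replaced sum is exactly $\gamma_\C$. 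For each $\eps > 0$ the replacement error is bounded by
\[
C \norm{f}_{C^3}\left( \eps + \frac{1}{\abs{G}} \sum_{a \in G} \E\bigl( \abs{Y_a}^2 \ind{\abs{Y_a} \ge \eps \sqrt{\abs{G}}} \bigr) \right)
\]
plus a negligible contribution from the Gaussian summands. By \eqref{E:Lindeberg-condition} this tends to $0$ as $n \to \infty$ and then $\eps \to 0$, and --- crucially --- it does not depend on $\chi$ because $\abs{\chi(a)} = 1$. Hence $\max_\chi \abs{\E f(\lambda_\chi) - \gamma_\C(f)} \to 0$, and averaging over $\widehat{G}$ yields $(\E\mu^{(n)})(f) \to \gamma_\C(f)$, i.e.\ $\E\mu^{(n)} \to \gamma_\C$ weakly.

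For convergence in probability it suffices, by Chebyshev's inequality along a countable determining family of such $f$, to show $\Var\bigl(\mu^{(n)}(f)\bigr) \to 0$. Writing $\Var(\mu(f)) = \frac{1}{\abs{G}^2} \sum_{\chi, \chi'} \Cov\bigl(f(\lambda_\chi), f(\lambda_{\chi'})\bigr)$, the diagonal terms contribute at most $\frac{\norm{f}_\infty^2}{\abs{G}} \to 0$. For $\chi \neq \chi'$ I would apply the same argument to the $\R^4$-valued sum $(\lambda_\chi, \lambda_{\chi'}) = \sum_a \frac{1}{\sqrt{\abs{G}}} Y_a(\chi(a), \chi'(a))$; by the covariance identities above its limiting law is a pair of \emph{independent} standard complex Gaussians, and the Lindeberg error is once more a $\chi$-independent multiple of the sum in \eqref{E:Lindeberg-condition} (now with the truncation threshold rescaled by $\sqrt{2}$). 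Consequently $\E\bigl[f(\lambda_\chi) f(\lambda_{\chi'})\bigr] \to \gamma_\C(f)^2$ uniformly over $\chi \neq \chi'$, while $\E f(\lambda_\chi) \to \gamma_\C(f)$ uniformly, so $\sup_{\chi \neq \chi'} \abs{\Cov(f(\lambda_\chi), f(\lambda_{\chi'}))} \to 0$ and the off-diagonal contribution vanishes. This gives $\Var(\mu^{(n)}(f)) \to 0$ and hence $\mu^{(n)} \to \gamma_\C$ weakly in probability.

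I expect the main obstacle to be arranging the central limit theorem so that all error estimates are uniform over $\chi$, and over pairs $\chi \neq \chi'$, simultaneously --- single-character convergence is not enough to control the averages over $\widehat{G}$. What makes this possible is precisely that $\abs{\chi(a)} = 1$, so the moduli of the summands, and therefore both the Lindeberg truncation sums and the isotropic limiting covariances, are the same for every character; the uniform CLT thus collapses to the single hypothesis \eqref{E:Lindeberg-condition}, with no dependence on the group structure through $p_2^{(n)}$.
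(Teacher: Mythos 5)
Your proof is correct and follows essentially the same route as the paper's: the paper proves this theorem by rerunning the proof of Theorem \ref{T:circular-law-correlated} with $\alpha = 0$ --- computing the $\chi$-independent covariance $\tfrac{1}{2}I_2$ of each $\lambda_\chi$ (and $\tfrac{1}{2}I_4$ for pairs $(\lambda_{\chi_1},\lambda_{\chi_2})$ with $\chi_1 \neq \chi_2$), applying a quantitative multivariate Lindeberg CLT uniformly in $\chi$ via $\abs{\chi(a)} = 1$, and concluding by the first/second-moment Chebyshev argument --- which is exactly your structure, including the observation that $\E (Y_a^{(n)})^2 = 0$ is what removes the dependence on $p_2^{(n)}$. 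The only difference is technical rather than conceptual: the paper invokes the Bhattacharya--Rao bound (Proposition \ref{T:Lindeberg}) in bounded-Lipschitz distance, whereas you run the classical Lindeberg replacement argument with $C^3$ test functions; both deliver the required uniform-in-$\chi$ error bounds.
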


\medskip

The special case of Theorem \ref{T:circular-law-uncorrelated} for
classical circulant matrices (that is, when the $G^{(n)}$ are cyclic
groups) was proved by the author in \cite{Meckes}.

\begin{thm}\label{T:semicircle-law-general}
  Let $\alpha \in [0,1]$, $\beta > 0$.  Suppose that for each
  $n$, $\bigl\{Y_a^{(n)} \mid a \in G^{(n)} \bigr\}$ are mean $0$ and
  independent except for the constraint $Y_{a^{-1}}^{(n)} =
  \overline{Y_a^{(n)}}$; that
  \[
  \E Y_a^{(n)} Y_b^{(n)} = \begin{cases}
    1 & \text{ if } a = b^{-1} \neq a^{-1}, \\
    \alpha & \text{ if } a = b \neq a^{-1}, \\
    \beta & \text{ if } a = b = a^{-1}, \\
    0 & \text{ otherwise,} \end{cases}
  \]
  for $a, b \in G^{(n)}$; and that \eqref{E:Lindeberg-condition}
  holds. Assume further that $\lim_{n\to \infty} p_2^{(n)} = p$
  exists.  Then $\mu^{(n)}$ converges, in mean and in probability, to
  \[
  (1-p) \Normal\bigl(0, 1 + p (\beta - \alpha - 1)\bigr) 
  + p \Normal\bigl(0, 1 + \alpha + p (\beta - \alpha - 1) \bigr).
  \]
  if $p<1$ and to
  \(
  \Normal\bigl(0, \beta \bigr)
  \)
  if $p=1$.

\end{thm}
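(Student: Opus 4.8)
The plan is to deduce the stated weak convergence—both in mean and in probability—from two estimates, valid for every bounded Lipschitz $f : \R \to \R$: first that the averaged measure $(\E\mu^{(n)})(f) = \frac{1}{\abs{G^{(n)}}}\sum_{\chi} \E f(\lambda_\chi)$ converges to $\nu(f)$, where $\nu$ is the claimed limit; and second that $\Var\bigl(\mu^{(n)}(f)\bigr) \to 0$. Together these give $\mu^{(n)}(f) \to \nu(f)$ in probability for each such $f$, hence weak convergence in probability once applied to a countable convergence-determining family, while the first estimate alone yields convergence in mean. Throughout I use that $M^{(n)}$ is Hermitian (so each $\lambda_\chi$ is real with mean $0$), that $H = \{a \in G : a^2 = 1\}$ is a subgroup of order $\abs{G} p_2^{(n)}$, and that $\chi(a) = \pm 1$ for every $a \in H$ and every $\chi$, since $\chi(a)^2 = \chi(a^2) = 1$.

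For the averaged measure the starting point is an exact variance. Writing $\lambda_\chi = \abs{G}^{-1/2}\sum_a Y_a \chi(a)$ and using $\overline{Y_b} = Y_{b^{-1}}$, $\overline{\chi(b)} = \chi(b^{-1})$, one gets
\[
\Var(\lambda_\chi) = \E\lambda_\chi^2 = \frac{1}{\abs{G}}\sum_{a,c} \E(Y_a Y_c)\,\chi(a)\chi(c).
\]
Inserting the hypothesized covariances and applying the character identity $\sum_{a\in G}\chi(a)^2 = \abs{G}\,\ind{\chi = \overline{\chi}}$ (orthogonality for the character $\chi^2$, which is trivial exactly when $\chi$ is real-valued) collapses the sum to $\Var(\lambda_\chi) = 1 + p_2^{(n)}(\beta - \alpha - 1) + \alpha\,\ind{\chi = \overline{\chi}}$. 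This takes only two values, converging to $\sigma_1^2 = 1 + p(\beta-\alpha-1)$ for non-real $\chi$ and $\sigma_2^2 = 1 + \alpha + p(\beta-\alpha-1)$ for real $\chi$. Now $\lambda_\chi$ is a normalized sum of the independent blocks $\abs{G}^{-1/2}Y_a\chi(a)$ for $a \in H$ and $2\abs{G}^{-1/2}\Re\bigl(Y_a\chi(a)\bigr)$ over the pairs $\{a,a^{-1}\}$ with $a^2\neq 1$; since $\abs{\chi(a)} = 1$, the associated Lindeberg ratio is bounded by a fixed multiple of the left side of \eqref{E:Lindeberg-condition}, and is in particular \emph{independent of $\chi$}. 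The Lindeberg--Feller theorem then gives $\E f(\lambda_\chi) \to \Normal(0,\sigma_1^2)(f)$ or $\Normal(0,\sigma_2^2)(f)$, uniformly over $\chi$ of each type. Since the real characters number $\abs{G}p_2^{(n)} \sim p\abs{G}$ by Lemma \ref{T:p2}, averaging yields $(\E\mu^{(n)})(f) \to (1-p)\Normal(0,\sigma_1^2)(f) + p\,\Normal(0,\sigma_2^2)(f)$, the asserted limit; the $p=1$ case merely drops the vanishing-weight term, sparing us the possibly-negative $\sigma_1^2$.

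For the variance I compute the eigenvalue covariances the same way:
\[
\Cov(\lambda_\chi,\lambda_{\chi'}) = \ind{\chi=\chi'} + \alpha\,\ind{\chi'=\overline{\chi}} + \frac{\beta-\alpha-1}{\abs{G}}\sum_{a\in H}(\chi\chi')(a),
\]
where the last sum equals $\abs{H}$ when $\chi\chi'$ is trivial on $H$ and $0$ otherwise. Thus, apart from the diagonal $\chi = \chi'$ and the reflection $\chi'=\overline{\chi}$ (only $O(\abs{G})$ pairs, contributing $O(1/\abs{G})$ to the variance), the sole correlated pairs are the $\abs{G}\,\abs{H^\perp} = \abs{G}^2/\abs{H}$ pairs with $\chi\chi' \in H^\perp$, for which $\Cov(\lambda_\chi,\lambda_{\chi'}) = (\beta-\alpha-1)p_2^{(n)}$. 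A bivariate Lindeberg--Feller theorem, again with a $\chi$-uniform Lindeberg ratio, shows $(\lambda_\chi,\lambda_{\chi'})$ is jointly approximately Gaussian: for the generic uncorrelated pairs this forces $\Cov\bigl(f(\lambda_\chi),f(\lambda_{\chi'})\bigr) \to 0$ uniformly, while for the correlated pairs it is controlled, up to the same uniform error, by $\abs{\Cov(\lambda_\chi,\lambda_{\chi'})} = O(p_2^{(n)})$. The latter contribute $\tfrac{1}{\abs{G}^2}\cdot\tfrac{\abs{G}^2}{\abs{H}}\cdot O(p_2^{(n)}) = O(p_2^{(n)}/\abs{H}) = O(1/\abs{G})$, so that $\Var(\mu^{(n)}(f)) \to 0$.

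The main obstacle is the passage from closeness of covariances to closeness of $\Cov(f(\lambda_\chi),f(\lambda_{\chi'}))$ across all $O(\abs{G}^2)$ pairs simultaneously: this demands the bivariate central limit approximation with an error that is $o(1)$ \emph{uniformly} in $(\chi,\chi')$, so that it survives summation and division by $\abs{G}^2$. Securing this uniformity—together with the book-keeping for the degenerate regime $p = 0$, where $\abs{H}$ need not grow but the relevant covariances themselves tend to $0$—is where essentially all of the analytic work lies; the algebraic reductions above are exact identities requiring only character orthogonality and the structure of the $2$-torsion subgroup $H$.
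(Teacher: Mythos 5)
Your outline follows the paper's own proof quite closely: the exact covariance computation via character orthogonality and the $2$-torsion subgroup ($A$ in the paper, your $H$), the identification of the correlated pairs as those with $\chi\vert_H = \chi'\vert_H$ and their count via Lemma \ref{T:extensions}, the CLT-for-each-eigenvalue plus variance-of-$\mu^{(n)}(f)$ scheme, and the conclusion through a convergence-determining class of bounded Lipschitz functions. The algebra (both values of $\Var(\lambda_\chi)$, the off-diagonal term $p_2(\beta-\alpha-1)\ind{\chi\vert_H=\chi'\vert_H}$, the count $\abs{G}^2/\abs{H}$) all agrees with the paper's \eqref{E:semicircle-covariance}. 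However, there are two genuine gaps in the analytic execution.

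First, you invoke ``the Lindeberg--Feller theorem'' and assert convergence of $\E f(\lambda_\chi)$ \emph{uniformly over $\chi$}; the qualitative Lindeberg--Feller theorem gives no such uniformity. What is needed, and what the paper uses, is the quantitative bound of Proposition \ref{T:Lindeberg} (Bhattacharya--Rao), whose error is controlled by exactly the $\chi$-independent Lindeberg functional you identify. You flag this uniformity as ``where the analytic work lies,'' but deferring it means the central estimate of the proof is asserted rather than proved; this part is repairable simply by citing the quantitative theorem. Second, and more substantively, your unified treatment of the correlated pairs --- bounding $\Cov\bigl(f(\lambda_\chi),f(\lambda_{\chi'})\bigr)$ by $O\bigl(\abs{\Cov(\lambda_\chi,\lambda_{\chi'})}\bigr)$ plus a uniform error --- requires (i) a Gaussian covariance (interpolation) inequality, which you neither state nor cite, and (ii) applying the bivariate CLT with errors uniform over pairs whose $2\times 2$ covariance matrices are then required to be \emph{uniformly non-degenerate}, since the change of coordinates needed in Proposition \ref{T:Lindeberg} degrades as the smallest eigenvalue goes to $0$. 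This non-degeneracy is not automatic from your estimates: for a correlated non-conjugate pair with both characters non-real the determinant is $1 + 2p_2(\beta-\alpha-1)$, which your bounds permit to approach (or cross) zero when $\beta$ is small and $p_2 \in \{1/2, 1/3\}$; ruling this out requires additional structural arguments about which characters can agree on $H$ (e.g., that when $p_2 = 1/3$ such pairs always contain a real character).

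The paper sidesteps this issue entirely with a case split that your own counting supports: if $p > 0$, the correlated non-conjugate pairs number only $\abs{G}/p_2 = O(\abs{G})$, so boundedness of $f$ alone shows they contribute $O\bigl(1/(p_2 \abs{G})\bigr) \to 0$ to the variance --- no CLT, no covariance inequality, and no non-degeneracy needed for these pairs; if $p = 0$, their correlation $p_2(\beta - \alpha - 1) \to 0$, so \emph{all} pair covariance matrices are asymptotically diagonal with entries bounded below, and the quantitative CLT applies uniformly. Replacing your unified $O(p_2/\abs{H})$ bound with this dichotomy, and Lindeberg--Feller with Proposition \ref{T:Lindeberg}, turns your outline into a complete proof.
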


\medskip

Observe that by Lagrange's theorem on orders of subgroups,
$1/p_2^{(n)}$ is an integer, which implies that if $p<1$ then in fact
$p\le 1/2$, and therefore the stated variances of the normal
distributions named above are indeed positive.

The most obvious (though not necessarily, as we shall see, the most
natural) special case of interest in Theorem
\ref{T:semicircle-law-general} is when the $Y_a^{(n)}$ are real and
i.i.d.\ (except for the symmetry constraint), so that $\alpha = \beta
= 1$.  In that case the limiting spectral distribution is the mixture
distribution
\begin{equation}\label{E:iid-limit}
(1-p) \Normal(0,1-p) + p \Normal(0,2-p).
\end{equation}

Two other special cases are suggested by considering the analogy with
the GOE and GUE.  The $G$-circulant analogue of the GOE, as discussed
in the previous section, would have real entries such that $\alpha =
1$ and $\beta = 2$, and thus the limiting spectral distribution
\begin{equation}\label{E:GOE-limit}
(1-p) \Normal(0,1) + p \Normal(0,2).
\end{equation}
The slightly simpler nature of this limiting distribution (note that
the parameter $p$ plays only one role in \eqref{E:GOE-limit}, as
opposed to two roles in \eqref{E:iid-limit}) reflects that a
``GOE-like'' normalization of entries is more natural than equal
variances.  However, this phenomenon is only evident when $0 < p < 1$.
In the classical case of Wigner matrices it is well known that in
order for the semicircle law to hold, no variance assumption need be
made on the diagonal entries of the matrix.  The situation described
above emphasizes that this is the case precisely because the number of
diagonal entries in a Wigner matrix is negligible.

Finally, when the second moments are the same as for the
``$G$-circulant GUE'' of Proposition \ref{T:GUE-eigenvalues}, then
$\alpha = 0$ and $\beta = 1$ and, as in Corollary \ref{T:GUE-limit},
the limiting spectral distribution is simply the standard real
Gaussian distribution, even regardless of the value of $p$. Thus for
$G$-circulant matrices, a constraint to be complex Hermitian appears
to be somehow more natural than a constraint to be real symmetric.  As
in Theorem \ref{T:circular-law-uncorrelated}, the assumption that
$p_2$ approaches a limit can even be removed in this situation.

\begin{thm}\label{T:semicircle-law-special}
  Suppose that for each $n$, $\bigl\{Y_a^{(n)} \mid a \in G^{(n)}
  \bigr\}$ are mean $0$ and independent except for the constraint
  $Y_{a^{-1}}^{(n)} = \overline{Y_a^{(n)}}$; that $\E \bigl\vert
  Y_a^{(n)}\bigr\vert^2 = 1$ for every $a \in G^{(n)}$; that $\E
  \bigl( Y_a^{(n)} \bigr)^2 = 0$ if $a \neq a^{-1}$; and that
  \eqref{E:Lindeberg-condition} holds. Then $\mu^{(n)}$ converges, in
  mean and in probability, to $\gamma_\R$.
\end{thm}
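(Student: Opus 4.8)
The plan is to reduce everything to the explicit eigenvalue formula \eqref{E:eigenvalue-formula} and to exploit that here the full second-moment structure of the eigenvalues is \emph{exactly} (not merely asymptotically) that of independent standard real Gaussians, with no dependence on $p_2^{(n)}$; this is precisely what will let us dispense with the hypothesis $p_2^{(n)} \to p$ required in Theorem \ref{T:semicircle-law-general}. First I would record that, since $Y_{a^{-1}}^{(n)} = \overline{Y_a^{(n)}}$, the matrix $M^{(n)}$ is Hermitian, so every $\lambda_\chi$ is real. The key computation is that the hypotheses $\E\abs{Y_a}^2 = 1$ and $\E Y_a^2 = 0$ for $a \neq a^{-1}$ combine into the single clean identity $\E\bigl[Y_a Y_b\bigr] = \ind{b = a^{-1}}$ for all $a,b$ (the case $b = a = a^{-1}$ being covered by $\E Y_a^2 = \E\abs{Y_a}^2 = 1$, since then $Y_a$ is real). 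Substituting into \eqref{E:eigenvalue-formula}, using $\chi'(a^{-1}) = \overline{\chi'(a)}$ and the orthogonality of characters from Lemma \ref{T:FT-isometry}(\ref{I:onb}),
\[
\E\bigl[\lambda_\chi \lambda_{\chi'}\bigr] = \frac{1}{\abs{G}} \sum_{a} \chi(a)\overline{\chi'(a)} = \ind{\chi = \chi'}.
\]
Thus the $\lambda_\chi$ are centered, have variance exactly $1$, and are pairwise uncorrelated, for every $n$ and independently of $p_2^{(n)}$.

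Next I would establish Gaussianity and asymptotic independence by a Lindeberg--Feller argument. Grouping the elements of $G$ into the fixed points $\{a : a^2 = 1\}$ and the inverse-pairs $\{a, a^{-1}\}$ with $a \neq a^{-1}$, the constraint lets me write
\[
\lambda_\chi = \frac{1}{\sqrt{\abs{G}}}\Bigl[\sum_{a^2 = 1} Y_a \chi(a) + \sum_{\{a,a^{-1}\}} 2\Re\bigl(Y_a\chi(a)\bigr)\Bigr],
\]
a sum of independent real random variables, one per fixed point and one per inverse-pair (note $Y_a$ and $\chi(a)$ are both real when $a^2 = 1$). Since $\abs{\chi(a)} = 1$, each summand is bounded by $2\abs{Y_a}/\sqrt{\abs{G}}$, so the classical Lindeberg condition for this triangular array is dominated by \eqref{E:Lindeberg-condition} and holds \emph{uniformly in $\chi$}. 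The univariate Lindeberg--Feller theorem then gives $\lambda_\chi \Rightarrow \gamma_\R$; applied to the $\R^2$-valued array attached to a pair $(\chi,\chi')$, whose limiting covariance is the identity by the computation above, the multivariate version gives $(\lambda_\chi,\lambda_{\chi'}) \Rightarrow \gamma_\R \otimes \gamma_\R$, again uniformly over pairs.

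To assemble the conclusion it suffices to test against a countable dense family of bounded Lipschitz $f \colon \C \to \R$. Convergence in mean follows from $\E\mu^{(n)}(f) = \frac{1}{\abs{G}}\sum_\chi \E f(\lambda_\chi)$ together with $\sup_\chi \abs{\E f(\lambda_\chi) - \gamma_\R(f)} \to 0$. For convergence in probability I would bound the variance: writing $\Var\bigl(\mu^{(n)}(f)\bigr) = \frac{1}{\abs{G}^2}\sum_{\chi,\chi'} \Cov\bigl(f(\lambda_\chi), f(\lambda_{\chi'})\bigr)$, the diagonal contributes at most $\norm{f}_\infty^2/\abs{G}$, while the bivariate limit being the \emph{product} measure forces $\sup_{\chi \neq \chi'}\abs{\Cov(f(\lambda_\chi), f(\lambda_{\chi'}))} \to 0$; hence $\Var\bigl(\mu^{(n)}(f)\bigr) \to 0$, and Chebyshev's inequality gives $\mu^{(n)}(f) \to \gamma_\R(f)$ in probability.

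I expect the main technical obstacle to be making the two Lindeberg limits \emph{uniform} in $\chi$ (and in pairs $\chi \neq \chi'$) for merely Lipschitz test functions, since the cleanest swapping estimate applies to $C^3$ functions with bounded derivatives. I would handle this by first mollifying $f$ and controlling the smoothing error through the uniformly bounded second moments, so that every error term depends only on the fixed covariance data and on the Lindeberg quantity in \eqref{E:Lindeberg-condition}; because $\abs{\chi(a)} = 1$, none of these bounds sees the particular character, which yields the required uniformity. (An equivalent route is to truncate the $Y_a$ at level $\eps\sqrt{\abs{G}}$, prove the limit for the bounded array by moments, and absorb the truncation error via \eqref{E:Lindeberg-condition}.) It is worth emphasizing that the exact identity covariance, rather than a covariance that merely tends to the identity, is exactly what removes any need for $p_2^{(n)}$ to converge.
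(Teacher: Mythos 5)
Your proposal is correct and takes essentially the same route as the paper, which proves this theorem as the case $\alpha = 0$, $\beta = 1$ of Theorem \ref{T:semicircle-law-general}: the exact covariance identity $\E \lambda_{\chi_1}\lambda_{\chi_2} = \ind{\chi_1 = \chi_2}$ (the paper's stated reason why no hypothesis on $p_2^{(n)}$ is needed), a Lindeberg-type CLT applied uniformly in $\chi$ and in pairs of characters, and the same first/second-moment argument against bounded Lipschitz test functions. The only substantive difference is bookkeeping: where you propose to obtain uniformity by mollification or truncation, the paper cites a quantitative multivariate Lindeberg theorem (Proposition \ref{T:Lindeberg}) whose bounded-Lipschitz error bound depends only on the Lindeberg quantity and the covariance data, hence is automatically uniform because $\abs{\chi(a)} = 1$, and the paper leaves implicit your explicit grouping of summands into fixed points and inverse pairs.
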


\medskip

The special case of Theorem \ref{T:semicircle-law-special} for
classical circulant matrices (with more restrictive assumptions on the
distributions of the matrix entries) was proved by Bose and Mitra in
\cite{BoMi}.

We will not attempt to deal thoroughly with the question of when the
convergence in probability in the results above can be strengthened to
almost sure convergence.  However, the following result gives some
sufficient conditions.  Each of the conditions stated automatically
implies the Lindeberg-type condition \eqref{E:Lindeberg-condition};
for the first part this follows from exponential tail decay which is
implied by a Poincar\'e inequality (see \cite[Corollary 3.2]{Ledoux}),
and for the other parts it is elementary.

\begin{thm}\label{T:as-convergence}
  In the setting of Theorem \ref{T:circular-law-correlated},
  \ref{T:circular-law-uncorrelated}, \ref{T:semicircle-law-general},
  or \ref{T:semicircle-law-special}, suppose in addition that
  $\abs{G^{(n)}} = \Omega(n^{\eps})$ for some $\eps > 0$ and that one
  of the following conditions holds:
  \begin{enumerate}
  \item There is a constant $K > 0$ such that for every $n$ and every
    $a \in G^{(n)}$, $Y_a^{(n)}$ satisfies a Poincar\'e inequality
    with constant $K$.  That is,
    \[
    \Var f\bigl(Y_a^{(n)}\bigr) 
    \le K \E \bigl\vert \nabla f\bigl(Y_a^{(n)}\bigr)\bigr\vert^2
    \]
    for every smooth $f : \R^2 \to \R$.
  \item There is a constant $K > 0$ such that $\bigl\vert
    Y_a^{(n)}\bigr\vert \le K$ a.s.\ for every $n$ and every $a \in
    G^{(n)}$.
  \item For some $\delta \in (0,1]$, $\sup_{n \in \N} \max_{a \in G^{(n)}} \E
    \bigl\vert Y_a^{(n)} \bigr\vert^{2 + \delta} < \infty$, and $\sum_{n = 1}^\infty
    \abs{G^{(n)}}^{-\delta/2} < \infty$.
  \item For some $\delta \in (0,1]$, $\sup_{n \in \N} \max_{a \in
      G^{(n)}} \E \bigl\vert Y_a^{(n)}\bigr\vert^{2 + \delta} <
    \infty$, and $p_2^{(n)} \to p > 0$.
  \end{enumerate}
  Then $\mu^{(n)}$ converges to the stated limit almost surely.
\end{thm}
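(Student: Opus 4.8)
The plan is to upgrade the convergence in probability already furnished by Theorems \ref{T:circular-law-correlated}--\ref{T:semicircle-law-special} to almost sure convergence, by the same route used for the almost sure halves of Corollaries \ref{T:C-Ginibre-limit} and \ref{T:GUE-limit}. Fix a bounded $1$-Lipschitz $f \colon \C \to \R$ and let $\nu$ denote the relevant limit measure. Since those theorems already give $\E\mu^{(n)}(f) \to \nu(f)$, and since weak convergence is detected by a countable family of such $f$, it suffices to prove that for each such $f$ and each $t > 0$,
\[
\sum_n \Prob\bigl[\,\abs{\mu^{(n)}(f) - \E\mu^{(n)}(f)} \ge t\,\bigr] < \infty,
\]
after which applying the Borel--Cantelli lemma to each member of a fixed countable, convergence-determining family of such $f$ and intersecting the resulting full-measure events gives almost sure weak convergence. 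The growth hypothesis $\abs{G^{(n)}} = \Omega(n^\eps)$ serves precisely to convert a decay rate in $\abs{G^{(n)}}$ into a convergent series.

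The structural fact underlying all four cases is that $F(Y) := \mu^{(n)}(f) = \frac{1}{\abs{G}}\sum_\chi f(\lambda_\chi)$ is a Lipschitz function of the entries with a small constant. By \eqref{E:eigenvalue-formula} the map $(Y_a) \mapsto (\lambda_\chi)$ is, up to the scalar $\abs{G}^{-1/2}$, the Fourier transform, which is an isometry by Lemma \ref{T:FT-isometry}(\ref{I:isometry}); thus $\sum_\chi \abs{\lambda_\chi - \lambda_\chi'}^2 = \sum_a \abs{Y_a - Y_a'}^2$, where $(\lambda_\chi')$ are the eigenvalues associated with entries $(Y_a')$. Together with Cauchy--Schwarz and the $1$-Lipschitz property of $f$, this gives
\[
\abs{F(Y) - F(Y')} \le \frac{1}{\abs{G}}\sum_\chi \abs{\lambda_\chi - \lambda_\chi'} \le \frac{1}{\sqrt{\abs{G}}}\Bigl(\sum_a \abs{Y_a - Y_a'}^2\Bigr)^{1/2},
\]
so $F$ is $\abs{G^{(n)}}^{-1/2}$-Lipschitz in the Euclidean metric on the space of entries; in the Hermitian settings one works instead with the $\abs{G^{(n)}}$ independent real coordinates surviving the constraint $Y_{a^{-1}} = \overline{Y_a}$, the Fourier transform remaining an isometry (up to a harmless constant) onto the real functions on $\widehat{G}$. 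The problem thus reduces to concentration of a Lipschitz function of the entries.

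Under condition (1), tensorization shows that the joint law of the entries satisfies a Poincar\'e inequality with the same constant $K$, and the resulting exponential concentration \cite[Cor.\ 3.2]{Ledoux}, applied to the $\abs{G^{(n)}}^{-1/2}$-Lipschitz function $F$, yields $\Prob[\abs{F - \E F} \ge t] \le C\exp(-c\, t\sqrt{\abs{G^{(n)}}})$, which is summable; this is the clean case and parallels Corollary \ref{T:C-Ginibre-limit}. For the other conditions the entries need not satisfy a Poincar\'e inequality, and here the generic Lipschitz estimate is too weak: a single entry can move $F$ by as much as $\abs{G}^{-1/2}$, so bounded differences summed over the $\abs{G}$ coordinates give only an $O(1)$ variance proxy, whereas the true fluctuations are of order $\abs{G}^{-1/2}$. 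The discrepancy is cancellation: the average $\frac{1}{\abs{G}}\sum_\chi f(\lambda_\chi)$ benefits from the oscillation of the characters, already visible in the exact independence of the eigenvalues in Proposition \ref{T:C-Ginibre-eigenvalues}. I would therefore estimate $\E\abs{F - \E F}^{2k}$ directly, writing $F - \E F$ as a sum of martingale differences $D_i$ in the entries and controlling the \emph{conditional} second moments (which are $O(\abs{G}^{-2})$ on average, reflecting $\Var F = O(\abs{G}^{-1})$) rather than their suprema. Under condition (2) boundedness controls every moment appearing in a Rosenthal-type bound, giving $\E\abs{F - \E F}^{2k} = O(\abs{G^{(n)}}^{-(k-1)})$; taking $k$ large depending on $\eps$ and applying Markov's inequality produces a summable tail. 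Under condition (3) only the $(2+\delta)$ moments are available, and a truncation at scale $\eps\sqrt{\abs{G^{(n)}}}$ (whose cost is $O(\abs{G^{(n)}}^{-\delta/2})$ by Markov's inequality) leads to a fluctuation bound of order $\abs{G^{(n)}}^{-\delta/2}$, summable under the explicit hypothesis $\sum_n \abs{G^{(n)}}^{-\delta/2} < \infty$.

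Condition (4) is the most delicate, as it replaces that summability with the single assumption $p_2^{(n)} \to p > 0$. Here I would first record the arithmetic point (noted after Theorem \ref{T:semicircle-law-general}) that $1/p_2^{(n)}$ is a positive integer by Lagrange's theorem; a sequence of positive integers converging to $1/p$ is eventually constant, so $p_2^{(n)}$ equals a fixed $p$ for all large $n$ and $\abs{G^{(n)}} = p^{-1}2^{k_n}$ runs through a geometric scale. Combined with $\abs{G^{(n)}} = \Omega(n^\eps)$, this rigidity limits how many indices $n$ can share a given value of $\abs{G^{(n)}}$, and it is this structure that must be played against the $(2+\delta)$-moment fluctuation estimate to recover a convergent series. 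I expect this last step --- extracting almost sure convergence from only $(2+\delta)$ moments without an a priori summability hypothesis --- to be the main obstacle, and to require exploiting the finer structure of the spectrum (the split into the $\approx p\abs{G}$ real eigenvalues coming from self-conjugate characters and the conjugate pairs coming from the rest) rather than the generic Lipschitz reduction alone.
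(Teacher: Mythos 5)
Your overall skeleton---a countable convergence-determining family of test functions, the $\abs{G^{(n)}}^{-1/2}$-Lipschitz reduction via the Fourier isometry and Cauchy--Schwarz, and Borel--Cantelli---is exactly the paper's, and your treatment of condition (1) (tensorized Poincar\'e inequality plus \cite[Cor.\ 3.2]{Ledoux}) matches the paper's proof. The first genuine gap is condition (2). Your premise that ``the generic Lipschitz estimate is too weak'' for bounded entries is mistaken: what is too weak is the bounded-differences (McDiarmid) bound, which sums coordinate oscillations. The paper instead applies Talagrand's convex-distance inequality \cite[Thm.\ 4.1.1]{Talagrand-ihes}, which for independent bounded variables gives dimension-free exponential concentration at the scale of the \emph{Euclidean} Lipschitz constant $\abs{G^{(n)}}^{-1/2}$, exactly as in case (1); the convexity Talagrand requires is available because each $\lambda_\chi$ is a linear function of the entries, so $\mu^{(n)}(f)$ is convex whenever $f$ is, and the cited \cite{Meckes-jfa} and the proof of \cite[Thm.\ 2]{Meckes} supply the version needed here. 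Your proposed substitute---a Rosenthal/Burkholder bound giving $\E\abs{F - \E F}^{2k} = O(\abs{G}^{-(k-1)})$---is asserted rather than proved, and the route you sketch does not close: such a bound requires control of the $k$-th moment of the predictable quadratic variation $\sum_j \E\bigl[D_j^2 \mid \mathcal{F}_{j-1}\bigr]$, whereas you offer only its \emph{expectation} (``on average''); with nothing better than $\abs{D_j} \le 2K\abs{G}^{-1/2}$ that term contributes $O(1)$, not the decay you need. Even your input $\Var F = O(\abs{G}^{-1})$ is unsupported: the paper's own second-moment estimate \eqref{E:var-bound} yields only $O(\delta'_n + \abs{G}^{-1})$, and under bounded entries $\delta'_n$ is of order $\abs{G}^{-1/2}$.

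The second gap is condition (4), which you leave unfinished, predicting that it requires ``finer structure of the spectrum.'' It does not: the paper's argument is purely arithmetic and reduces (4) to (3). Once $p_2^{(n)} \to p > 0$, Lagrange's theorem forces $p_2^{(n)} = p$ for all large $n$; by the classification of finite abelian groups, $G^{(n)} \cong \bigl(\prod_{j} \Z_{2^{k_j}}\bigr) \times H$ with $H$ of odd order, so the number of involutions is a power of $2$ and $\abs{G^{(n)}} = 2^{m_n}/p$ runs through a fixed geometric sequence. Since the sizes increase to infinity through this sequence without repetition (the paper's proof invokes strict monotonicity of $\abs{G^{(n)}}$ here), they eventually grow exponentially, so $\sum_n \abs{G^{(n)}}^{-\delta/2} < \infty$ holds automatically and case (3) applies verbatim; no spectral input is needed, and the $\Omega(n^\eps)$ hypothesis becomes redundant. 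Your instinct that repeated values of $\abs{G^{(n)}}$ cause trouble is correct as far as it goes---with only $\abs{G^{(n)}} \to \infty$ and $\abs{G^{(n)}} = \Omega(n^\eps)$, the count of indices sharing the value $2^m/p$ is bounded only by a constant times $2^{m/\eps}$, which defeats summability when $\eps < 2/\delta$---but the resolution is the monotonicity just described, not any analysis of real versus complex eigenvalues. Finally, for case (3) the paper needs neither truncation nor martingales: the Lyapunov hypothesis upgrades the Lindeberg error terms $\delta_n, \delta'_n$ in the proofs of the main theorems to $O(\abs{G^{(n)}}^{-\delta/2})$ (via \cite[Cor.\ 18.3]{BhRa}), after which \eqref{E:mean-bound}, \eqref{E:var-bound}, Chebyshev, and Borel--Cantelli finish; your truncation sketch aims at the same polynomial rate but routes it through the unproved machinery of your case (2).
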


\bigskip

We now turn to the proofs of our main results. Unsurprisingly,
generalizing the results of the last section to non-Gaussian matrix
entries is achieved by using an appropriate version of the central
limit theorem to show that the eigenvalues $\lambda_\chi$ are
approximately distributed like uncorrelated Gaussian random variables.
Even to prove asymptotic results, it is necessary here to apply some
\emph{quantitative} version of the central limit theorem, in order to
achieve suitably uniform control over the $\lambda_\chi$. The approach
taken here (and earlier in \cite{Meckes}) generalizes and extends the
method used by Bose and Mitra in \cite{BoMi}, which applied a
multivariate version of the Berry--Esseen theorem and thus required
the matrix entries to have uniformly bounded third moments.  Here a
quantitative, multivariate version of Lindeberg's theorem is applied.

If $f : \R^d \to \R$ is bounded and Lipschitz with Lipschitz constant
$\abs{f}_L$, its bounded Lipschitz norm may be defined by
\[
\norm{f}_{BL} = \max\{ \norm{f}_\infty, \abs{f}_L\}.
\]
The bounded Lipschitz distance between random vectors $X$ and $Y$
in $\R^d$ is defined by
\[
d_{BL}(X,Y) = \sup_{\norm{f}_{BL} \le 1} \abs{\E f(X) - \E f(Y)}.
\]
It is well known (see e.g.\ \cite[section 11.3]{Dudley}) that the
class of bounded Lipschitz functions is a convergence-determining
class. The subclass of compactly supported such functions is
furthermore separable with respect to the sup norm \cite[Corollary
11.2.5]{Dudley}.  Thus to show that a sequence $\nu^{(n)}$ of
probability measures on $\R^d$ converges weakly to $\nu$ in mean, in
probability, or almost surely, it suffices to show that for each
bounded Lipschitz function $f$, $\nu^{(n)}(f) \to \nu(f)$ in the same
sense.

The following is a special case of \cite[Theorem 18.1]{BhRa} (cf.\ the
proof of \cite[Corollary 18.2]{BhRa}).

\begin{prop}\label{T:Lindeberg}
  Suppose that $X_1, \dotsc, X_k$ are independent mean $0$ random
  vectors in $\R^d$ such that $\frac{1}{k} \sum_{j=1}^k \Cov(X_j) =
  I_d$. For $\eps > 0$ let
  \[
  \theta(\eps) = \frac{1}{k} \sum_{j=1}^k \E \left(\norm{X_j}^2
    \ind{\norm{X_j} > \eps \sqrt{k}} \right).
  \]
  Then
  \[
  d_{BL} \left(\frac{1}{\sqrt{k}} \sum_{j=1}^k X_j, Z\right) \le
    C_d \inf_{0 \le \eps \le 1} (\eps + \theta(\eps)),
  \]
  where $Z$ is a standard Gaussian random vector in $\R^d$, and $C_d >
  0$ depends only on $d$.
\end{prop}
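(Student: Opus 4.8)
The plan is to prove this by the Lindeberg replacement method, the quantitative form of the classical swapping argument. Because $d_{BL}$ tests only against Lipschitz functions while the swap needs several derivatives, the first step is a smoothing reduction. Fix $f$ with $\norm{f}_{BL} \le 1$, write $S = \frac{1}{\sqrt{k}}\sum_{j=1}^k X_j$, and replace $f$ by its convolution $f_\delta = f * \gamma_\delta$ with a centered Gaussian kernel at scale $\sqrt{\delta}$. Then $\norm{f - f_\delta}_\infty \le C_d\sqrt{\delta}$, so smoothing costs at most $C_d\sqrt{\delta}$ in the comparison of either $\E f(S)$ or $\E f(Z)$, while $f_\delta$ is smooth with the derivatives moved onto the kernel, giving $\norm{\nabla^2 f_\delta}_\infty \le C_d\delta^{-1/2}$ and $\norm{\nabla^3 f_\delta}_\infty \le C_d\delta^{-1}$.

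The core of the argument is the swap applied to $f_\delta$. Let $G_1, \dotsc, G_k$ be independent centered Gaussian vectors with $\Cov(G_j) = \Cov(X_j)$, independent of the $X_j$; by the hypothesis $\frac{1}{k}\sum_j \Cov(X_j) = I_d$, the normalized sum $\frac{1}{\sqrt{k}}\sum_j G_j$ is exactly a standard Gaussian $Z$. Telescoping along the hybrid vectors $W_i = \frac{1}{\sqrt{k}}\bigl(\sum_{j \le i} G_j + \sum_{j > i} X_j\bigr)$, with $W_0 = S$ and $W_k = Z$, reduces the problem to bounding, for each $i$, the single discrepancy $\abs{\E f_\delta(U_i + \tfrac{1}{\sqrt{k}}X_i) - \E f_\delta(U_i + \tfrac{1}{\sqrt{k}}G_i)}$, where $U_i$ is independent of both $X_i$ and $G_i$. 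I would Taylor-expand $f_\delta$ to second order about $U_i$: since $X_i$ and $G_i$ are centered and share a covariance while $U_i$ is independent of each, the constant, first-order, and second-order terms agree in expectation, leaving only the two second-order remainders, each bounded by $\min\bigl(\norm{\nabla^2 f_\delta}_\infty\norm{h}^2, \tfrac{1}{6}\norm{\nabla^3 f_\delta}_\infty\norm{h}^3\bigr)$.

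The Lindeberg truncation then controls the remainders. On the $X$ side I would split according to whether $\norm{X_i} \le \eps\sqrt{k}$ or not: on the small event the cubic bound together with $\norm{X_i}^3 \le \eps\sqrt{k}\,\norm{X_i}^2$ produces, after summing and using $\frac{1}{k}\sum_j \E\norm{X_j}^2 = \tr I_d = d$, a contribution of order $\norm{\nabla^3 f_\delta}_\infty\, d\,\eps$; on the large event the quadratic bound produces exactly $\norm{\nabla^2 f_\delta}_\infty\,\theta(\eps)$. The Gaussian increments are handled the same way, their truncated second moments being controlled through the same normalization and standard Gaussian tail estimates. Collecting terms yields, for each fixed $\eps$, a bound of the shape
\[
\abs{\E f(S) - \E f(Z)} \le C_d\bigl(\sqrt{\delta} + \delta^{-1}\eps + \delta^{-1/2}\theta(\eps)\bigr).
\]

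The main obstacle is sharpness. The elementary bound above, after optimizing in $\delta$, does tend to $0$ under the Lindeberg hypothesis $\theta(\eps) \to 0$, but it yields a weaker rate than the clean $\eps + \theta(\eps)$ asserted here: the mollification unavoidably trades smoothness against the sup-norm error, and the Gaussian increments naturally contribute a $\sqrt{\theta(\eps)}$ term rather than $\theta(\eps)$. (The same difficulty persists in a Stein-type treatment, since the solution of the Gaussian Stein equation for a merely Lipschitz $f$ does not have a bounded third derivative.) Recovering the precise stated inequality is exactly where the Fourier/Edgeworth machinery of \cite[Theorem 18.1]{BhRa} is needed, so the cleanest route is to specialize that theorem directly: one checks that the normalization $\frac{1}{k}\sum_j \Cov(X_j) = I_d$ is its hypothesis and that $\theta(\eps)$ is precisely the Lindeberg ratio appearing in its conclusion (as in the proof of \cite[Corollary 18.2]{BhRa}), whereupon the asserted bound follows with a constant $C_d$ depending only on $d$.
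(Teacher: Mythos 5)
Your proposal ends exactly where the paper does: the paper gives no independent proof of this proposition, introducing it as a special case of \cite[Theorem 18.1]{BhRa} (cf.\ the proof of \cite[Corollary 18.2]{BhRa}), which is precisely your final step of checking that the hypothesis $\frac{1}{k}\sum_{j}\Cov(X_j)=I_d$ and the quantity $\theta(\eps)$ match that theorem's normalization and Lindeberg ratio. Your preliminary swapping-plus-mollification analysis, and your diagnosis that it only yields a degraded bound of the shape $C_d\bigl(\sqrt{\delta}+\delta^{-1}\eps+\delta^{-1/2}\theta(\eps)\bigr)$ rather than $C_d\inf_{0\le\eps\le 1}(\eps+\theta(\eps))$, are both correct, so deferring to the Fourier-analytic machinery of \cite{BhRa} is the right call.
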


\medskip

\begin{proof}[Proof of Theorem \ref{T:circular-law-correlated}]
  Let $f:\C \to \R$ with $\norm{f}_{BL} \le 1$.  Observe that
  \begin{equation}\label{E:mean-int}
    \E \mu(f) = 
    \frac{1}{\abs{G}} \sum_{\chi \in \widehat{G}} \E f(\lambda_\chi)
    = \frac{1}{\abs{G}} \sum_{\chi \in \widehat{G}} 
    \E f\left(\frac{1}{\sqrt{\abs{G}}}\sum_{a \in G} \chi(a) Y_a\right),
  \end{equation}
  where $(n)$ superscripts have been omitted for simplicity.  We
  consider $\lambda_\chi$ as a sum of independent random vectors in
  $\R^2 \cong \C$. The relevant covariances are
  \[
  \Cov(\chi(a) Y_a) = \begin{bmatrix}
    \E (\Re \chi(a) Y_a)^2 & \E (\Re \chi(a) Y_a) (\Im \chi(a) Y_a) \\
    \E (\Re \chi(a) Y_a) (\Im \chi(a) Y_a) & \E (\Im \chi(a) Y_a)^2
  \end{bmatrix}.
  \]
  The identities
  \begin{equation}\begin{split}\label{E:Re-Im}
      (\Re w)(\Re z)
      &= \tfrac{1}{2} \Re \bigl[(w + \overline{w}) z \bigr], \\
      (\Im w)(\Im z)
      &= \tfrac{1}{2} \Re \bigl[(\overline{w} - w) z \bigr], \\
      (\Re w)(\Im z) &= \tfrac{1}{2} \Im \bigl[(w - \overline{w}) z
      \bigr],
  \end{split}\end{equation}
  will be useful.
  
  Setting $w = z = \chi(a) Y_a$ for a fixed $\chi \in \widehat{G}$,
  \begin{align*}
    \sum_{a \in G} \E(\Re \chi(a) Y_a)^2 &=
    \sum_{a \in G} \left[\frac{1}{2} \Re \E 
      \left(\chi(a)^2 Y_a^2 + \abs{\chi(a)}^2 \abs{Y_a}^2 \right)\right] \\
    &= \frac{1}{2}\left( \abs{G} + \alpha \sum_{a \in G} \chi^2(a) \right)
    = \frac{\abs{G}}{2}\bigl( 1 + \alpha \ind{\chi = \overline{\chi}}\bigr).
  \end{align*}
  In the last step we have used that unless $\chi$ is real-valued, $\chi$
  and $\overline{\chi}$ are distinct characters, and hence orthogonal in
  $\ell^2(G)$. In similar fashion, we find that
  \[
  \Cov\bigl(\lambda_\chi\bigr) 
  = \frac{1}{\abs{G}} \sum_{a \in G}
  \Cov \bigl(\chi(a) Y_a\bigr) = \frac{1}{2} \bigl( I_2 +
  \ind{\chi = \overline{\chi}} \bigl[\begin{smallmatrix} \alpha & 0 \\
    0 & - \alpha \end{smallmatrix}\bigr]\bigr).
  \]
  Observe in particular that if $\alpha = 1$ and $\chi$ is
  real-valued, then $\lambda_\chi$ is almost surely real, with
  variance $1$; in that case we treat $\lambda_\chi$ as a random
  variable in $\R$, as opposed to a random vector in
  $\R^2$. Proposition \ref{T:Lindeberg} and
  \eqref{E:Lindeberg-condition} (recalling that $\abs{\chi(a)} = 1$
  always) now imply that there is a sequence $\delta_n$ decreasing to
  $0$ such that for each $\chi \in \widehat{G}$,
  \[
  \abs{\E f\left(\frac{1}{\sqrt{\abs{G}}}\sum_{a \in G} \chi(a) Y_a\right)
    - \gamma_\alpha(f)} \le \delta_n
  \]
  if $\chi$ is real-valued, and 
  \[
  \abs{\E f\left(\frac{1}{\sqrt{\abs{G}}}\sum_{a \in G} \chi(a) Y_a\right)
    - \gamma_\C(f)} \le \delta_n
  \]
  otherwise. Writing $\nu^{(n)} = (1-p_2^{(n)}) \gamma_\C + p_2^{(n)}
  \gamma_\alpha$, by \eqref{E:mean-int} it follows that
  \begin{equation}\label{E:mean-bound} \begin{split}
      \abs{\E \mu(f) - \nu(f)} & = \Biggl\vert \frac{1}{\abs{G}}
      \sum_{\chi = \overline{\chi}} \E f\left(\frac{1}{\sqrt{\abs{G}}}
        \sum_{a \in G} \chi(a) Y_a\right)  - p_2
        \gamma_\alpha(f) \\
      & \qquad + \frac{1}{\abs{G}}\sum_{\chi
          \neq \overline{\chi}} \E f\left(\frac{1}{\sqrt{\abs{G}}} \sum_{a
            \in G}
          \chi(a) Y_a\right) - (1-p_2) \gamma_\C(f) \Biggr\vert \\
      & \le p_2 \delta_n + (1-p_2) \delta_n = \delta_n,
    \end{split}
  \end{equation}
  where as above the subscripts $(n)$ are omitted.  Since $p_2^{(n)}
  \to p$, it follows that $\nu^{(n)} \Rightarrow (1-p) \gamma_\C + p
  \gamma_\alpha$, and so $\E \mu^{(n)} \Rightarrow (1-p) \gamma_\C + p
  \gamma_\alpha$.
  
  \medskip

  Next observe that
  \begin{equation}\label{E:var-int}
    \E \bigl(\mu(f) \bigr)^2 =
    \frac{1}{\abs{G}^2} \sum_{\chi_1, \chi_2 \in \widehat{G}}
    \E f(\lambda_{\chi_1}) f (\lambda_{\chi_2})
    = \frac{1}{\abs{G}^2} \sum_{\chi_1, \chi_2 \in \widehat{G}}
    \E F\bigl( (\lambda_{\chi_1}, \lambda_{\chi_2}) \bigr),
  \end{equation}
  where $F:\C^2 \to \R$ is defined by $F(w,z) = f(w)f(z)$, so that
  $\norm{F}_{BL} \le 2$. We now consider
  $(\lambda_{\chi_1},\lambda_{\chi_2})$ as a sum of independent random
  vectors in $\R^4$. The upper-left and lower-right $2\times 2$ blocks
  of $\Cov \bigl((\lambda_{\chi_1}, \lambda_{\chi_2}) \bigr)$ are of
  course just $\Cov(\lambda_{\chi_1})$ and $\Cov(\lambda_{\chi_2})$,
  computed above. For the off-diagonal blocks, we use $w = \chi_1(a)
  Y_a$ and $z = \chi_2(a) Y_a$ in \eqref{E:Re-Im} to obtain for
  example
  \begin{align*}
    \sum_{a \in G} \E(\Re \chi_1(a) Y_a) (\Re \chi_2(a) Y_a) &=
    \sum_{a \in G} \left[\frac{1}{2}\Re \E \left( \chi_1(a) \chi_2(a)
        Y_a^2 + \overline{\chi_1(a)} \chi_2(a)
        \abs{Y_a}^2 \right) \right] \\
    &= \frac{1}{2} \left( \alpha \sum_{a \in G} \chi_1(a) \chi_2(a)
      + \sum_{a \in G} \overline{\chi_1(a)} \chi_2(a)\right) \\
    &= \frac{\abs{G}}{2} \bigl( \alpha \ind{\chi_1 =
      \overline{\chi_2}} + \ind{\chi_1 = \chi_2}\bigr).
  \end{align*}
  Similarly, it follows that the off-diagonal blocks of
  $\Cov\bigl((\lambda_{\chi_1}, \lambda_{\chi_2})\bigr)$ are $0$
  unless $\chi_1 = \chi_2$ or $\chi_1 = \overline{\chi_2}$.

  Assume for now that $\chi_1 \neq \chi_2$ and $\chi_1 \neq
  \overline{\chi_2}$. Applying Proposition \ref{T:Lindeberg}, we now
  obtain that there is a sequence $\delta'_n$ decreasing to $0$ such
  that whenever $\norm{f}_{BL} \le 1$,
  \[
  \abs{\E f(\lambda_{\chi_1}) f(\lambda_{\chi_2}) 
    - \gamma_\alpha(f)^2} \le \delta'_n
  \]
  if $\chi_1$ and $\chi_2$ are both real-valued,
  \[
  \abs{\E f(\lambda_{\chi_1}) f(\lambda_{\chi_2}) 
    - \gamma_\C(f) \gamma_\alpha(f)} \le \delta'_n
  \]
  if exactly one of $\chi_1$ and $\chi_2$ is real-valued, and
  \[
  \abs{\E f(\lambda_{\chi_1}) f(\lambda_{\chi_2}) 
    - \gamma_\C(f)^2} \le \delta'_n
  \]
  if neither $\chi_1$ nor $\chi_2$ is real-valued.  (Note that
  Proposition \ref{T:Lindeberg} may be applied in the case of
  nonidentity covariance via a linear change of coordinates. For
  $\alpha < 1$, the determinant of the covariance is bounded away from
  zero, whereas for $\alpha = 1$ the variables are real.) Given
  $\chi_1$, note that there are at most $2$ characters $\chi_2$ which
  are unaccounted for.  By \eqref{E:var-int}, it now follows that
  \begin{equation}\label{E:var-bound}
  \abs{\E \mu^{(n)}(f)^2 
   - \nu^{(n)}(f)^2} 
   \le \delta'_n + \frac{2}{\abs{G^{(n)}}}.
  \end{equation}

  Finally,
  \begin{align*}
    \E \abs{\mu^{(n)}(f) - \nu^{(n)}(f)}^2 
    & = \bigl[\E \mu^{(n)}(f)^2 - \nu^{(n)}(f)^2\bigr] 
    - 2 \nu^{(n)}(f) \bigl[\E \mu^{(n)}(f) - \nu^{(n)}(f)\bigr] \\
    & \le \abs{\E \mu^{(n)}(f)^2 - \nu^{(n)}(f)^2}
     + 2 \abs{\E \mu^{(n)}(f) - \nu^{(n)}(f)},
  \end{align*}
  so by \eqref{E:mean-bound} and \eqref{E:var-bound},
  \[
  \mu^{(n)} (f) \to \bigl[(1-p) \gamma_\C + p \gamma_\alpha\bigr](f)
  \]
  in $L^2$, and hence in probability.
\end{proof}

\medskip

\begin{proof}[Proof of Theorem \ref{T:circular-law-uncorrelated}]
  The proof is analogous to that of Theorem
  \ref{T:circular-law-correlated}, setting $\alpha = 0$. In that case
  $\Cov(\lambda_\chi)$ no longer depends on whether $\chi$ is
  real-valued, which makes it unnecessary to assume that $p_2^{(n)}$
  approaches a limit.
\end{proof}

\medskip

\begin{proof}[Proof of Theorem \ref{T:semicircle-law-general}]
  We omit $(n)$ superscripts as before.  We will assume that $p<1$;
  the case $p=1$ (which implies that in fact $p_2 = 1$ for
  sufficiently large $n$) is similar and slightly simpler.  Let $A =
  \{ a \in G \mid a = a^{-1} \}$.  Since $G$ is abelian, $A$ is a
  subgroup of $G$.  The restriction of a character of $G$ to $A$ is a
  character on $A$, which is necessarily real-valued on $A$.  It
  follows that for $\chi_1, \chi_2 \in \widehat{G}$,
  \begin{equation}\label{E:semicircle-covariance}\begin{split}
    \abs{G} \E \lambda_{\chi_1} \lambda_{\chi_2} &=
    \sum_{a,b \in G} \chi_1(a) \chi_2(b) \E Y_a Y_b \\
    & = \sum_{a \in G} \chi_1(a) \bigl[\bigl(\overline{\chi_2(a)} +
    \alpha \chi_2(a)\bigr) \ind{a \neq a^{-1}}
    + \beta \chi_2(a) \ind{a = a^{-1}} \bigr] \\
    & = \sum_{a \in G \setminus A} \chi_1(a) \overline{\chi_2(a)} + \alpha
    \sum_{a \in G \setminus A} \chi_1(a) \chi_2(a)
    + \beta \sum_{a \in A} \chi_1(a) \chi_2(a) \\
    & = \sum_{a \in G} \chi_1(a) \overline{\chi_2(a)} + \alpha \sum_{a
      \in G} \chi_1(a) \chi_2(a)
    + (\beta - \alpha - 1) \sum_{a \in A}  \chi_1(a) \chi_2(a) \\
    & = \abs{G} \bigl( \ind{\chi_1 = \chi_2} + \alpha \ind{\chi_1 =
      \overline{\chi_2}}\bigr) +  \abs{A} (\beta - \alpha - 1) \ind{\chi_1
      \vert_A = \overline{\chi_2} \vert_A}  \\
    & = \abs{G} \bigl( \ind{\chi_1 = \chi_2} + \alpha \ind{\chi_1 =
      \overline{\chi_2}} + p_2 (\beta - \alpha - 1) \ind{\chi_1
      \vert_A = \chi_2 \vert_A} \bigr).
  \end{split}\end{equation}
  In particular, for $\chi \in \widehat{G}$,
  \[
  \Var(\lambda_\chi) = 1 + \alpha \ind{\chi = \overline{\chi}}
  + p_2 (\beta - \alpha - 1).
  \]
  Denoting
  \[
  \nu^{(n)} = (1-p_2^{(n)}) \Normal\bigl(0,1 + p_2^{(n)}(\beta -
  \alpha - 1)\bigr) + p_2^{(n)} \Normal\bigl(0, 1 + \alpha + p_2^{(n)}(\beta - 
  \alpha - 1)\bigr),
  \]
  it follows as in the proof of Theorem
  \ref{T:circular-law-correlated} that $d_{BL}(\E \mu^{(n)},
  \nu^{(n)}) \to 0$, and thus that 
  \[
  \E \mu^{(n)} \Rightarrow (1-p) \Normal\bigl(0,1 + p(\beta - \alpha -
  1)\bigr) + p \Normal\bigl(0, 1 + \alpha + p(\beta - \alpha -
  1)\bigr).
  \]
  In this situation just the $1$-dimensional case of Proposition
  \ref{T:Lindeberg} is necessary.  Observe also that the variances
  $\Var(\lambda_\chi)$ are uniformly bounded away from $0$ (cf.\ the
  comments following the statement of the theorem.) This is necessary
  so that Proposition \ref{T:Lindeberg} may be applied for nonidentity
  covariance, via a linear change of coordinates, and still yield
  error bounds $\delta_n$ which are uniform in $f$ with $\norm{f}_{BL}
  \le 1$.
  
  \medskip

  By \eqref{E:semicircle-covariance}, if $\chi_1 \neq \chi_2$ and
  $\chi_1 \neq \overline{\chi_2}$, then
  \begin{align*}
  \Cov \bigl((\lambda_{\chi_1}, \lambda_{\chi_2})\bigr)
  & = \bigl(1 + p_2 (\beta - \alpha - 1)\bigr) I_2 + \alpha 
  \begin{bmatrix} \ind{\chi_1 = \overline{\chi_1}} & 0 \\
    0 & \ind{\chi_2 = \overline{\chi_2}} \end{bmatrix} \\
  & \quad + p_2 (\beta -
  \alpha - 1) \ind{\chi_1\vert_A = \chi_2\vert_A}
  \begin{bmatrix} 0 & 1 \\ 1 & 0 \end{bmatrix}. 
  \end{align*}

  We consider separately the cases $p = 0$ and $p > 0$.  If $p = 0$,
  then when $\chi_1 \neq \chi_2$ and $\chi_1 \neq \overline{\chi_2}$,
  we have
  \[
  \Cov \bigl((\lambda_{\chi_1}, \lambda_{\chi_2})\bigr)
  = \begin{bmatrix} 1 + \alpha \ind{\chi_1 = \overline{\chi_1}} & 0 \\
    0 & 1 + \alpha \ind{\chi_2 = \overline{\chi_2}} \end{bmatrix}
  + o(1).
  \]
  From here the argument is completed as in the proof of Theorem
  \ref{T:circular-law-correlated}.

  Suppose now that $p > 0$. Given $\chi_1 \in \widehat{G}$, by Lemma
  \ref{T:extensions} there are exactly $\frac{1}{p_2}$ values of
  $\chi_2 \in \widehat{G}$ with $\chi_1\vert_A = \chi_2\vert_A$.
  Therefore,
  \[
  \Cov \bigl((\lambda_{\chi_1}, \lambda_{\chi_2})\bigr) = \bigl(1 +
  p_2 (\beta - \alpha - 1)\bigr) I_2 + \alpha
  \begin{bmatrix} \ind{\chi_1 = \overline{\chi_1}} & 0 \\
    0 & \ind{\chi_2 = \overline{\chi_2}} \end{bmatrix}
  \]
  for all but a negligible fraction of pairs $\chi_1, \chi_2 \in
  \widehat{G}$.  The argument is again completed as in the proof of
  Theorem \ref{T:circular-law-correlated}.
\end{proof}

\medskip

\begin{proof}[Proof of Theorem \ref{T:semicircle-law-special}]
  The proof is analogous to that of Theorem
  \ref{T:semicircle-law-general}, setting $\alpha = 0$ and $\beta =
  1$. In that case $\Cov(\lambda_{\chi_1}, \lambda_{\chi_2}) =
  \ind{\chi_1 = \chi_2}$, so it is unnecessary to assume that
  $p_2^{(n)}$ approaches a limit.
\end{proof}

\medskip

\begin{proof}[Proof of Theorem \ref{T:as-convergence}]
  \begin{enumerate}
  \item The Poincar\'e inequality assumption and independence imply an
    exponential concentration property for the family of eigenvalues
    $\bigl\{\lambda_\chi \mid \chi \in \widehat{G}^{(n)}\bigr\}$.  In
    particular, combining Corollaries 5.7 and 3.2 of \cite{Ledoux}, it
    follows that for each $L$-Lipschitz $F: \ell^2(G^{(n)}) \to \R$,
    \[
    \Prob\left[\abs{F\bigl(Y^{(n)}\bigr) - \E F\bigl(Y^{(n)}\bigr)}
      \ge t \right] \le 2 e^{-c t /\sqrt{K} L}
    \]
    for each $t > 0$, where $c > 0$ is some absolute constant and
    $Y^{(n)}$ is shorthand for $\bigl(Y_a^{(n)}\bigr)_{a\in G^{(n)}}$.
    Now for a $1$-Lipschitz $f:\C \to \R$ and $k \in \N$,
    \[
    \abs{\frac{1}{k} \sum_{j=1}^k f(w_j) - \frac{1}{k} \sum_{j=1}^k f(z_j)}
    \le \frac{1}{k} \sum_{j=1}^k \abs{w_j - z_j}
    \le \sqrt{\frac{1}{k} \sum_{j=1}^k \abs{w_j - z_j}^2}
    \]
    by the Cauchy--Schwarz inequality. Combining this with Lemma
    \ref{T:FT-isometry}(\ref{I:isometry}) it follows that
    $\mu^{(n)}(f)$ is $\abs{G^{(n)}}^{-1/2}$-Lipschitz as a function of
    $Y^{(n)}$, and so
    \[
    \Prob\left[\abs{\mu^{(n)}(f) - \E \mu^{(n)}(f)\bigr)}
      \ge t \right] \le 2 e^{-c t \sqrt{\abs{G^{(n)}}/ K}}.
    \]    
    Combined with the already known convergence in mean and the
    Borel--Cantelli lemma, this implies almost sure convergence of
    $\mu(f)$.
  \item The proof is similar to the previous part, using instead
    Talagrand's convex-distance concentration inequality for
    independent bounded random variables \cite[Theorem
    4.1.1]{Talagrand-ihes} (see e.g.\ \cite[Corollary 4]{Meckes-jfa}
    for an explicit statement of a version that applies directly to
    complex random variables), cf.\ the proof of \cite[Theorem
    2]{Meckes}).
  \item The stated Lyapunov-type assumption yields upper bounds on all
    the $\delta_n$ quantities in the proofs above of order
    $\abs{G^{(n)}}^{-\delta/2}$ for $0 < \delta \le 1$ (cf.\
    \cite[Corollary 18.3]{BhRa}). Thus the assumption that
    $\sum_{n=1}^\infty \abs{G^{(n)}}^{-\delta/2}$ allows the
    Borel--Cantelli lemma to be applied again.
  \item The assumption that $p > 0$ implies that $\abs{G^{(n)}}$
    actually grows exponentially: since $p_2^{(n)}$ is always the
    reciprocal of an integer (by Lagrange's theorem about the orders
    of subgroups of finite groups), $p_2^{(n)} \to p > 0$ implies that
    $p_2^{(n)}$ is eventually constant.  By the classification of
    finite abelian groups,
    \[
    G \cong \left(\prod_{j = 1}^m \Z_{2^{k_j}} \right) \times H,
    \]
    where $m \ge 0$, $k_j \ge 1$ for each $j$, and each nonidentity
    element of $H$ has odd order.  (For simplicity of notation, we are
    again suppressing the dependence of all these on $n$.)  In this
    notation, the number of $a \in G$ such that $a = a^{-1}$ is $2^m$,
    so that $\abs{G} = 2^m / p_2$.  The hypothesis that
    $\abs{G^{(n)}}$ is strictly increasing thus implies that $m$ is
    eventually strictly increasing, and hence $\abs{G^{(n)}}$ is
    eventually \emph{exponentially} increasing.  Therefore the
    previous part of the theorem applies. \qedhere
  \end{enumerate}
\end{proof}

\bibliographystyle{plain}
\bibliography{G-circ-abelian}

\end{document}